\documentclass[12pt]{article}
\usepackage{mathtools}
\usepackage{amsmath,amssymb,amsbsy}
\usepackage{graphicx}
\usepackage{multirow}
\usepackage{tikz}
\usepackage{tkz-graph}
\usetikzlibrary{shapes.geometric}
\usepackage{amsfonts}
\usepackage{amsthm}
\usepackage{amssymb}
\usepackage{graphicx, enumerate}
\usepackage{amsmath}
\usepackage{latexsym}
\usepackage{longtable}
\usepackage{tabularx}
\usepackage{amsmath}
\usepackage{amsfonts}
\usepackage{amsthm}
\usepackage{setspace}
\usepackage{graphicx}
\usepackage{float}
\usepackage{rotating}
\usepackage{tikz}
\usepackage{verbatim}
\usepackage{soul}

\textwidth 16cm \hoffset -1cm

\def\zet{\mathbb{Z}}

\def\fk2{\lfloor\frac{k}{2}\rfloor}
\def\ck2{\lceil\frac{k}{2}\rceil}
\def\k1{\lfloor\frac{k+1}{2}\rfloor}

\newtheorem{theorem}{Theorem}[section]

\newtheorem{observation}[theorem]{Observation}
\newtheorem{corollary}[theorem]{Corollary}
\newtheorem{conjecture}[theorem]{Conjecture}

\newtheorem{problem}[theorem]{Problem}

%\newauthor{%
%Sylwia Cichacz}{%
%S. Cichacz}{%
%AGH University of Science and Technology,\\ 
%al. A. Mickiewicza 30, 30-059 Krakow, Poland}[%
%cichacz@agh.edu.pl]

%\newauthor{%
%Agnieszka G\"{o}rlich}{%
%A. G\"{o}rlich}{%
%AGH University of Science and Technology,\\ 
%al. A. Mickiewicza 30, 30-059 Krakow, Poland}[%
%forys@agh.edu.pl]

%\newauthor{%
%Zsolt Tuza}{%
%Zs. Tuza}{%
%Alfr\'{e}d R\'{e}nyi Institute of Mathematics, Hungarian Academy of Sciences,\\
%H-1053 Budapest, Re\'{a}ltanoda u.~13--15;\\
%and\\
%Department of Computer Science and Systems Technology,\\
%University of Pannonia, H-8200 Veszpr\'{e}m, Egyetem
%u.~10, Hungary}[%
%tuza@dcs.uni-pannon.hu]
\title{$E_A$-cordial labeling of graphs and its implications for $A$-antimagic labeling of trees}
\author{Sylwia Cichacz\\\\
\normalsize AGH University of Krak\'{o}w, \vspace{2mm} Poland}

%\keywords{cordial graph, path-cordial Abelian groups}

%\classnbr{05C65, 05C78}
\begin{document}

\maketitle
\begin{abstract}
If $A$ is a finite Abelian group, then a labeling $f \colon E (G) \rightarrow A$ of the edges of some graph $G$ induces a vertex labeling on $G$; the vertex $u$ receives the label $\sum_{v\in N(u)}f (v)$, where $N(u)$ is an open neighborhood of the vertex $u$. A graph $G$ is $E_A$-cordial if there is an edge-labeling such that (1) the edge label classes
differ in size by at most one and (2) the induced vertex label classes differ in size by at most one. Such a labeling is called $E_A$-cordial. In the literature, so far only $E_A$-cordial labeling in cyclic groups has been studied. 

The corresponding problem was studied by Kaplan, Lev and Roditty. Namely, they introduced $A^*$-antimagic labeling as a generalization of antimagic labeling \cite{ref_KapLevRod}. 
Simply saying, for a tree of order $|A|$ the $A^*$-antimagic labeling is such $E_A$-cordial labeling that the label $0$ is prohibited on the edges. 

In this paper, we give necessary and sufficient conditions for paths to be $E_A$-cordial for any cyclic $A$. We also show that the conjecture for $A^*$-antimagic labeling of trees posted in \cite{ref_KapLevRod} is not true.
\end{abstract}
%\begin{keyword}
%$\vi$-cordial graph, hypergraph, labeling of hypergraph, hypertree 
%\end{keyword}

%%%%%%%%%%%%%%%%%%%%%%%%%%%%%%%%%%%%%%%%%%%%%%%%%%%%%%%%%%%%%%%%%%
%%%%%%%%%%%%%%%%%%%%%%%%%%%%%%

\section{Introduction}
Assume $A$ is a finite Abelian group of order $n$ with the operation denoted by $+$.  For convenience we will write $ka$ to denote $a + a + \cdots + a$ (where the element $a$ appears $k$ times), $-a$ to denote the inverse of $a$, and use $a - b$ instead of $a+(-b)$.  Moreover, the notation $\sum_{a\in S}{a}$ will be used as a short form for $a_1+a_2+a_3+\cdots+a_t$, where $a_1, a_2, a_3, \dots,a_t$ are all elements of the set $S$. The identity element of $A$ will be denoted by $0$. Recall that any group element $\iota\in\Gamma$ of order 2 (i.e., $\iota\neq 0$ and $2\iota=0$) is called an \emph{involution}. Let us denote the number of involutions in $\Gamma$ by $|I(\Gamma)|$.

For a graph $G=(V,E)$,  an Abelian group $A$ and an $A$-labeling
$c: V \to A$ let $v_c(a)=|c^{-1}(a)|$.
 The labeling $c$ induces an edge
labeling $c^*:E \to A$  defined by $c^*(e)=\sum_{v \in
e}c(v)$, let $e_{c}(a)=|{c^*}^{-1}(a)|$. The labeling $c$ is called an $A$-\textit{cordial labeling} if
$|v_c(a)-v_c(b)| \leq 1$ and $|e_{c}(a)-e_{c}(b)|\leq 1$ for any
$a,b \in A$.
 A graph $G$ is said to be {\it $A$-cordial} if it admits an $A$-cordial
labeling $c$.

The notion of $\zet_2$-cordial labeling of graphs was introduced by Cahit \cite{Cahit} as a weakened
version of graceful labeling and harmonious labeling. This notion was generalized by Hovey for any Abelian group \cite{Hovey}. 
Hovey proved that cycles are $\zet_k$-cordial for all odd $k$; for $k$ even $C_{2mk+j}$ is $\zet_k$-cordial when $0\leq j \leq \frac{k}{2}+ 2$ and when $k < j <2k$. Moreover he showed that $C_{(2m+1)k}$ is not $\zet_k$-cordial and posed a conjecture that  for $k$ even the cycle $C_{2mk + j}$ where $0 \leq j < 2k$, is $\zet_k$-cordial if and
only if $j \neq k$. This conjecture was verified by Tao \cite{Tao}.  Tao’s result combined with those of Hovey show that:
\begin{theorem}[\cite{Hovey,Tao}]\label{cycle}
The cycle $C_n$ is $\zet_k$-cordial if and only if $k$ is odd or  $n$ is
not an odd multiple of $k$.
\end{theorem}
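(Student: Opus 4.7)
The plan is to prove the two directions separately: a parity obstruction gives the necessary direction, while the sufficient direction requires explicit constructions that split into cases.

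For the forward (necessity) direction, suppose $k$ is even and $n = (2m+1)k$, and assume for contradiction that $c$ is a $\zet_k$-cordial labeling of $C_n$. By balance, each element of $\zet_k$ occurs exactly $2m+1$ times as a vertex label and exactly $2m+1$ times as an induced edge label. Summing vertex labels over all of $V(C_n)$ gives $(2m+1)\cdot\frac{k(k-1)}{2}$, which modulo $k$ equals $\frac{k}{2}$ since both $2m+1$ and $k-1$ are odd. On the other hand, because every vertex of $C_n$ has degree $2$, the sum of the induced edge labels equals twice the sum of the vertex labels, hence is $0$ modulo $k$. However, balance also forces this edge sum to equal $(2m+1)\cdot\frac{k(k-1)}{2}\equiv\frac{k}{2}\pmod{k}$. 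Since $\frac{k}{2}\not\equiv 0\pmod{k}$, this is a contradiction.

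For the reverse direction when $k$ is odd, write $n=qk+r$ with $0\le r<k$ and assign vertex $v_i$ the label $i\bmod k$ as we traverse $C_n$. Then every vertex label occurs either $q$ or $q+1$ times. Each edge joins labels that differ by $1$ modulo $k$, so induced edge labels along consecutive vertices run through $1,3,5,\ldots\pmod{k}$; since $\gcd(2,k)=1$, this sequence traverses all residues of $\zet_k$ uniformly. Only the single wrap-around edge needs to be inspected separately, and I would check that it shifts counts by at most one, preserving cordiality.

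For the reverse direction when $k$ is even and $n$ is not an odd multiple of $k$, a uniform construction cannot work, precisely because of the obstruction above. I would partition by the residue $j$ of $n$ modulo $2k$, where $0\le j<2k$ and $j\ne k$, and exhibit a labeling obtained by concatenating several "blocks," each of which is internally balanced in both vertex- and edge-label counts. The easy ranges are $0\le j\le \frac{k}{2}+2$ and $k<j<2k$, where natural periodic blocks (for instance interleavings of the sequence $0,1,\ldots,k-1$ with a shifted copy) balance both parameters essentially by inspection; this is Hovey's portion. The main obstacle is the remaining range $\frac{k}{2}+2<j<k$ handled by Tao, where no obvious block decomposition is available: one must insert a bounded number of correction labels at the seams between blocks and verify that these local perturbations simultaneously preserve the two balance conditions. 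Carrying out this verification case by case is the principal technical difficulty of the whole argument.
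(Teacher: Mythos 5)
Your necessity argument is fine: when $k$ is even and $n=(2m+1)k$, balance forces every vertex label and every edge label to occur exactly $2m+1$ times, the vertex-label sum is $\equiv\frac{k}{2}\pmod k$, the edge-label sum is twice the vertex-label sum and hence $\equiv 0$, while balance forces it to be $\equiv\frac{k}{2}$ — a genuine contradiction. (Note the paper itself does not prove this theorem; it quotes it from Hovey and Tao, and the nontrivial content is precisely the sufficiency direction.)

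The sufficiency direction, however, has a real gap. For odd $k$ your explicit construction (label $v_i$ with $i\bmod k$) is simply not cordial in general: take $k=3$, $n=5$, so the vertex labels are $0,1,2,0,1$; the induced edge labels are $1,0,2,1$ on the path edges and $1$ on the wrap-around edge, so the label $1$ occurs three times while $0$ and $2$ occur once each, violating $|e_c(a)-e_c(b)|\le 1$. The problem is not only the wrap-around edge: the residue $r=n\bmod k$ controls both where the vertex-count surplus sits and which edge label the closing edge receives, and for many $r$ the closing edge lands on a label that is already at its maximum count. So "check that it shifts counts by at most one" is exactly the step that fails, and Hovey's actual proof for odd $k$ requires a more careful construction than the cyclic labeling. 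For even $k$ you have only described a plan: you name the residue classes $j$ of $n$ modulo $2k$, assert that blocks work in Hovey's range $0\le j\le\frac{k}{2}+2$ and $k<j<2k$, and explicitly defer the range $\frac{k}{2}+2<j<k$ (Tao's theorem) to an unspecified case-by-case verification. Since that range is the whole difficulty of the result, what you have written is an outline of where a proof would go, not a proof. To complete the argument you would need either to supply the explicit block constructions and seam corrections, or to cite Hovey and Tao for sufficiency as the paper does.
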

It was proved recently.
\begin{theorem}[\cite{Cic}]\label{Cic}
Any cycle $C_n$, $n>2$ is $A$-cordial, if $|A|$ is odd.
Any path $P_n$ is $A$-cordial unless $A\cong (\zet_2)^m$ for $m>1$ and  $n=|A|$ or $n=|A|+1$.
\end{theorem}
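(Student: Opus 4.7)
The plan is to prove the cycle claim and the path claim separately, constructing explicit labelings in the affirmative cases and ruling out the listed exceptions via a telescoping/summation argument. A common scaffolding handles both: write $n=q|A|+r$ with $0\le r<|A|$, produce a base enumeration of $A$ of length $|A|$ whose consecutive sums enumerate $A$, concatenate $q$ copies, and finish with a controlled residue of length $r$.

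For the cycle case with $|A|$ odd, I would first settle $n=|A|$. For $A=\zet_k$ with $k$ odd the natural order $0,1,\ldots,k-1$ works: the edge labels are $\{2i+1:i=0,\ldots,k-2\}\cup\{k-1\}$, and since $|A|$ odd makes $x\mapsto 2x+1$ a bijection on $\zet_k$, these $k$ labels exhaust $\zet_k$. For general abelian $A$ of odd order, I would decompose $A$ as a product of odd cyclic factors and combine coordinatewise orderings to get an analogous enumeration. Extending to $n>|A|$ reduces to concatenating $q$ such blocks with a small boundary adjustment and prepending a short residual block of length $r$; vertex-class balance is automatic from the block structure, and edge-class balance follows from the balance of each block. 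For the path case, deleting a suitable edge of the cyclic construction already handles most residues of $n$ modulo $|A|$; the remaining cases (including all even $|A|$) are handled by splitting into subcases according to whether $A$ has a cyclic factor of even order larger than $2$, or $A\cong(\zet_2)^m$.

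The exception $A\cong(\zet_2)^m$ with $m>1$ and $n\in\{|A|,|A|+1\}$ requires an impossibility argument and is where I expect the main difficulty. The key tool is a telescoping identity: since $2a=0$ for every $a\in(\zet_2)^m$, the sum of all induced edge labels in a labeling $c$ of $P_n$ collapses to $c(v_1)+c(v_n)$. When $n=|A|$ each vertex label appears exactly once, and cordiality forces the edge labels to be $|A|-1$ distinct elements of $A$, missing exactly one element $m^*$; combining $\sum_{a\in A}a=0$ (valid precisely because $m\ge 2$) with the telescoping gives $m^*=c(v_1)+c(v_n)$. Splitting the edges further by index parity yields an additional identity: the odd-indexed edge labels sum to $\sum_a a=0$ while the even-indexed ones sum to $c(v_1)+c(v_n)\neq 0$. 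For $m=2$ the odd-indexed part consists of only two edges forced to be equal, an immediate contradiction; for larger $m$ one must exploit the subgroup structure of $(\zet_2)^m$ to push these linear constraints into a genuine incompatibility with the required distinctness of the edge labels. The case $n=|A|+1$ reduces to the previous one by accounting for the single vertex label used twice, which shifts the telescoping sum by a controlled amount. The main obstacle is precisely this last impossibility step for general $m$, where one needs a structural, not merely numerical, refinement of the telescoping argument.
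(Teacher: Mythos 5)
First, note that the paper you are checking against does not prove this statement at all: Theorem~\ref{Cic} is quoted from \cite{Cic}, so your proposal has to stand on its own, and as it stands it has two genuine gaps. The more serious one is in the impossibility part, and you acknowledge it yourself: the telescoping plus parity-splitting identities you derive (odd-indexed edge labels sum to $0$, even-indexed ones to $c(v_1)+c(v_n)$) only force a contradiction when $m=2$; for $m\geq 3$ a set of distinct labels summing to $0$ with complement summing to $c(v_1)+c(v_n)$ is perfectly possible, so ``exploit the subgroup structure'' is not a proof but a placeholder for the missing idea. The missing idea is in fact elementary and makes the parity splitting unnecessary: in $(\zet_2)^m$ an induced edge label $c(u)+c(v)$ equals $0$ if and only if $c(u)=c(v)$. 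For $n=|A|$ the vertex labeling is a bijection, so $0$ never occurs on an edge; since the $|A|-1$ edge labels must be pairwise distinct, the unique missing edge label is forced to be $m^*=0$, while your own telescoping identity gives $m^*=c(v_1)+c(v_n)\neq 0$ --- a contradiction for every $m\geq 2$. For $n=|A|+1$ all $|A|$ edge labels must occur exactly once, in particular $0$, which requires two adjacent vertices with equal labels; but telescoping gives $c(v_1)+c(v_n)=\sum_{a\in A}a=0$, so the unique repeated vertex label sits on the two endpoints, which are not adjacent --- again a contradiction. Your ``shifts the telescoping sum by a controlled amount'' does not reach this conclusion without the observation about the label $0$.

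The second gap is that the affirmative direction is only a plan, yet it is the bulk of the theorem. For cycles with $|A|$ odd your explicit labeling of $C_k$ for cyclic $A$ and $n=|A|$ is correct, but the extension to arbitrary $n>2$ (including $n<|A|$, where your scheme degenerates to an unspecified ``controlled residue'', and the boundary edges between concatenated blocks, where vertex- and edge-balance is not ``automatic'') is asserted, not proved; likewise the ``coordinatewise'' passage from odd cyclic to general odd abelian groups needs an actual construction or a citation (it can be extracted, for $n=|A|$, from results on sums along Hamiltonian cycles such as Theorem~\ref{Lev}, but that is a nontrivial theorem, not a one-line product argument). Most importantly, the path statement covers \emph{all} finite abelian groups, and the even-order case --- including $A\cong(\zet_2)^m$ with $n\notin\{|A|,|A|+1\}$ and groups with a unique involution --- is dismissed in one sentence (``handled by splitting into subcases''), with no construction offered. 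So the skeleton is reasonable, but both the general impossibility step and essentially all of the even-order constructive work still have to be supplied.
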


Yilmaz and Cahit introduced an edge version of cordial labeling called $E_k$-cordial. For a graph $G=(V,E)$,  a labeling
$f: E \to \zet_k$, $k>1$ let $e_f(a)=|f^{-1}(a)|$.
 The labeling $f$ induces a vertex
labeling $f^*:V \to \zet_k$  defined by $f^*(v)=\sum_{u \in
N(v)}f(u)$, let $v_{f}(a)=|{f^*}^{-1}(a)|$. The labeling $f$ is called an $E_k$-\textit{cordial labeling} if
$|v_f(a)-v_f(b)| \leq 1$ and $|e_{f}(a)-e_{f}(b)|\leq 1$ for any
$a,b \in \zet_k$. If a graph has an $E_k$-cordial labeling, then we call it $E_k$-cordial graph.

Yilmaz and Cahit proved the following:
\begin{theorem}[\cite{YilCah}]\label{Yil}
    If a graph $G$ has $n\equiv 2\pmod 4$ vertices, then it is not $E_2$-cordial.  
\end{theorem}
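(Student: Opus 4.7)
The plan is to derive a contradiction from a parity argument applied to the sum of induced vertex labels. Assume $G$ has $n \equiv 2 \pmod 4$ vertices and suppose, for contradiction, that $f:E(G) \to \zet_2$ is an $E_2$-cordial labeling. Since $v_f(0) + v_f(1) = n$ is even and $|v_f(0) - v_f(1)| \le 1$, the only possibility is the equality $v_f(0) = v_f(1) = n/2$. Because $n \equiv 2 \pmod 4$, the quotient $n/2$ is odd, so in particular $v_f(1)$ must be odd.

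The second step would be a straightforward double counting. Summing the induced labels over all vertices gives
\[
\sum_{v \in V} f^*(v) \;=\; \sum_{v \in V}\sum_{u \in N(v)} f(uv) \;=\; 2\sum_{e\in E} f(e) \;\equiv\; 0 \pmod{2},
\]
since each edge $uv$ contributes its label to exactly the two endpoints $u$ and $v$. On the other hand, reducing the left-hand side modulo $2$ yields $v_f(1) \pmod 2$. Thus $v_f(1)$ must be even, contradicting the conclusion of the previous step and completing the proof.

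There is essentially no technical obstacle here: the whole argument is a one-line parity observation once one notices that the cordiality constraint forces equal class sizes whenever the number of edges is distributed over an even vertex set. The only point worth emphasizing is that the hypothesis $n \equiv 2 \pmod 4$ is used precisely to guarantee that the forced common class size $n/2$ has the wrong parity; if one only assumed $n$ even, the congruence $n \equiv 0 \pmod 4$ would make $n/2$ even and the same calculation would fail to produce a contradiction, which is consistent with the well-known existence of $E_2$-cordial labelings in that case.
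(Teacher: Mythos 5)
Your proof is correct: the forced equality $v_f(0)=v_f(1)=n/2$ (odd), combined with the double count $\sum_{v}f^*(v)\equiv 2\sum_{e}f(e)\equiv 0\pmod 2$, which forces $v_f(1)$ to be even, gives the contradiction, and you only need the vertex-balance half of the cordiality condition. The paper itself states this result without proof, citing Yilmaz and Cahit, and your parity/handshake argument is essentially the standard proof of that cited theorem, so there is nothing to reconcile beyond a small wording slip in your last paragraph (it is the vertex set, not ``the number of edges,'' whose even size forces the equal class sizes).
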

They also showed that this is a sufficient condition for trees and cycles to be $E_2$-cordial. Liu, Liu, and Wu considered $E_k$-cordial labeling for $k>2$ and proved the following.
\begin{theorem}[\cite{LiuLiu}]\label{Liu}
    A path $P_n$, $n\ge3$ has $E_p$-cordial labeling for odd $p$.  
\end{theorem}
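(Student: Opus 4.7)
The plan is to prove the theorem by induction on $n$ with step size $p$. Labeling the vertices of $P_n$ as $v_1, \ldots, v_n$ and the edges $e_i = v_iv_{i+1}$, and writing $a_i = f(e_i)$, the induced vertex labels satisfy $f^*(v_1) = a_1$, $f^*(v_n) = a_{n-1}$, and $f^*(v_i) = a_{i-1} + a_i$ for $2 \le i \le n-1$. My goal is to establish (i) a ``block extension'' that takes an $E_p$-cordial labeling of $P_n$ to one of $P_{n+p}$, and (ii) base cases for $n$ in a complete residue system modulo $p$.

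For the inductive step I would append the block $a_{n+k} = k$ for $k = 0, 1, \ldots, p - 1$. Each edge-label class then grows by exactly one, so edge balance is preserved. For the vertex labels, the old leaf $v_n$ becomes an interior vertex whose label $a_{n-1} + a_n = a_{n-1}$ is unchanged; the new interior vertices $v_{n+k}$ for $k = 1, \ldots, p - 1$ receive $2k - 1$; and the new leaf $v_{n+p}$ gets $p - 1$. Since $\gcd(2, p) = 1$, the $p - 1$ residues $\{1, 3, 5, \ldots, 2p - 3\}$ exhaust $\zet_p \setminus \{p - 1\}$ modulo $p$, and appending $p - 1$ yields a full residue system, so every vertex-label class also grows by exactly one.

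It remains to verify the theorem for base cases $n \in \{3, 4, \ldots, p + 2\}$. A natural first attempt is $a_i \equiv i \pmod p$, for which the interior labels form the arithmetic progression $f^*(v_i) \equiv 2i - 1 \pmod p$. Writing $n - 1 = qp + r$ with $0 \le r < p$, a direct count shows that this labeling is $E_p$-cordial whenever $r = 0$ or $r$ is even: the endpoints $v_1 \equiv 1$ and $v_n \equiv r$ are then distinct from one another and avoid the ``extras'' of the interior AP, and their contributions boost residue counts uniformly. The labeling fails for odd $r \in \{1, 3, \ldots, p - 2\}$: for $r = 1$, $v_1 = v_n = 1$ gives a double boost to residue $1$; for odd $r \ge 3$, the residue $r$ is already an extra of the interior AP (at position $(r - 1)/2$) and the endpoint $v_n \equiv r$ produces an overcount.

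The main obstacle is correcting these $(p - 1)/2$ failing residue classes. My plan is to handle each by a local modification --- for instance, swapping $a_{n-1}$ with a neighboring edge whose current label is underrepresented among vertex classes --- and verifying by direct computation that the three affected vertex labels $v_{n-2}, v_{n-1}, v_n$ redistribute to reduce the overcounted residue and fill the deficit. A cleaner variant would be to use $a_i = \sigma(i \bmod p)$ for a permutation $\sigma$ of $\zet_p$ chosen so that $\tau(j) := \sigma(j - 1) + \sigma(j)$ is also a permutation of $\zet_p$ and $\sigma((n - 1) \bmod p)$ avoids the extras set for the given $r$. Edge balance is automatic under any such modification, so the delicate part is producing the appropriate correction for every failing residue class $r$.
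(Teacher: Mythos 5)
Your inductive step is correct and verifiable as stated: appending the edge block $0,1,\dots,p-1$ leaves the old leaf's label unchanged (its new incident edge is $0$), gives the new interior vertices the labels $1,3,\dots,2p-3$, which, since $\gcd(2,p)=1$, are exactly the residues other than $p-1$, and the new leaf supplies $p-1$; so every edge class and every vertex class grows by exactly one. The problem is the base cases, and there the argument has a genuine gap. You check that $a_i\equiv i$ works only for about half of the residues $r=(n-1)\bmod p$, and for the failing odd $r$ you offer only a plan: ``swap $a_{n-1}$ with a neighboring edge'' (not carried out, and not verified that the three affected vertex sums actually repair the overcount without creating a new one), or ``choose a permutation $\sigma$ with $\tau(j)=\sigma(j-1)+\sigma(j)$ also a permutation and with $\sigma((n-1)\bmod p)$ avoiding the extras'' (this merely restates the requirement; no such $\sigma$ is exhibited for the failing $r$). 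For $n\le p+1$ the base case demands $n-1$ distinct edge labels whose $n$ induced vertex sums are (essentially) pairwise distinct, which is exactly the delicate sum-rainbow/sequencing-type condition the whole theorem hinges on, so it cannot be waved through. As written, the proof establishes the result only for those $n$ congruent to the ``good'' residues modulo $p$.

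For comparison, the paper does not prove this statement directly (it is quoted from Liu, Liu and Wu); instead it derives the stronger Corollary~\ref{oddpaths} by a different route that sidesteps your base-case difficulty entirely: by Theorem~\ref{cycle} (Hovey--Tao) the cycle $C_n$ is $\zet_p$-cordial for odd $p$, Observation~\ref{cc} converts this vertex labeling into an $E_p$-cordial labeling of $C_n$, and Observation~\ref{cp} shifts all edge labels by a fixed group element so that $0$ is a most-frequent edge label and deletes a $0$-edge to obtain $P_n$. If you want to salvage your induction, the cleanest fix is to use exactly this cycle-deletion argument (or an explicit known $R$-sequencing-type construction for $\zet_p$) to produce the $p$ base cases $3\le n\le p+2$, after which your block-appending step finishes the proof.
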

We generalize the notion of $E_k$-cordial labeling for any finite Abelian group $A$. Namely, for a graph $G=(V,E)$,  an Abelian group $A$ and an $A$-labeling
$f: E \to A$ let $e_f(a)=|f^{-1}(a)|$.
 The labeling $f$ induces a vertex
labeling $f^*:V \to A$  defined by $f^*(v)=\sum_{u \in
N(v)}f(u)$, let $v_{f}(a)=|{f^*}^{-1}(a)|$. The labeling $f$ is called an $E_A$-\textit{cordial labeling} if
$|v_f(a)-v_f(b)| \leq 1$ and $|e_{f}(a)-e_{f}(b)|\leq 1$ for any
$a,b \in A$.
 A graph $G$ is said to be {\it $E_A$-cordial} if it admits an $E_A$-cordial
labeling $f$.

One of the most well-known conjectures in the field of graph labeling is the {\em antimagic conjecture} by Hartsfield and Ringel~\cite{HarRin}, which claims that the edges of every graph except $K_2$ can be labeled by integers $1,2,\dots,|E|$ so that the weight of each vertex is different.
In \cite{ref_KapLevRod}, Kaplan, Lev and Roditty considered the following generalization of the
concept of an anti-magic graph. For a finite Abelian group, let $A^* = A \setminus \{0\}$.
An {$A^*$-antimagic labeling} of
$G$ is a bijection from the set of edges to
the set $A^*$, such that all the vertex sums are pairwise
distinct. 
They posted the following conjecture:
\begin{conjecture}[\cite{ref_KapLevRod}]\label{false}
    A tree with $|A|$ vertices is $A^*$-antimagic if and only if $|I(A)|\neq 1$.
\end{conjecture}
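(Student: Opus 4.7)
The conjecture is a biconditional, and I plan to refute only its ``if'' direction, since the ``only if'' direction is in fact valid: if $|I(A)|=1$ with unique involution $\iota$, then pairing each non-self-inverse element with its inverse gives $\sum_{a\in A}a=\iota$, hence $\sum_{a\in A^*}a=\iota$, so any bijection $f\colon E(T)\to A^*$ on a tree $T$ of order $|A|$ forces
\[
\sum_{v\in V(T)}w(v)\;=\;2\sum_{e\in E(T)}f(e)\;=\;2\iota\;=\;0.
\]
An antimagic labeling would make the $|A|$ vertex weights a permutation of $A$ and hence sum to $\iota\ne 0$, a contradiction. So any counterexample to Conjecture~\ref{false} must be a tree on $|A|$ vertices that fails to be $A^*$-antimagic even though $|I(A)|\ne 1$.

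The counterexample I would produce is the minimal one: take $A=V_4=\mathbb{Z}_2\times\mathbb{Z}_2$, for which $|I(A)|=3\ne 1$, paired with the tree $T=P_4$ on vertices $v_1,v_2,v_3,v_4$ and edges $e_i=v_iv_{i+1}$. For any bijection $f\colon\{e_1,e_2,e_3\}\to V_4^*$ one computes
\[
w(v_1)+w(v_3)\;=\;f(e_1)+f(e_2)+f(e_3)\;=\;\sum_{a\in V_4^*}a\;=\;0,
\]
using that the three nonidentity elements of $V_4$ sum to zero. Since every element of $V_4$ satisfies $-x=x$, this collapses to $w(v_3)=w(v_1)$, and the four vertex weights cannot all be distinct. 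Hence $P_4$ admits no $V_4^*$-antimagic labeling, contradicting the ``if'' direction of the conjecture.

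The hard part of this plan is not the computation but the strategic observation of which direction of the biconditional to attack; once the sum-of-weights identity is in hand, the equation $2\iota=0$ immediately isolates $|I(A)|=1$ as the regime forced to fail, and the minimal pair $(V_4,P_4)$ then presents itself as the natural first attempt in the remaining regime. One could try to extend this single example into a family by seeking pairs $(A,T)$ in which two vertices are forced to receive equal weight by a sum-zero identity analogous to $a+b+c=0$ in $V_4$; such extensions are not needed to falsify the stated conjecture, but they would quantify how badly the ``if'' direction breaks down.
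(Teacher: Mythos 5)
Your refutation is correct and follows essentially the same route as the paper: the paper also disproves the ``if'' direction with the family $P_{2^m}$ and $A\cong(\mathbb{Z}_2)^m$, $m>1$ (arguing that no two distinct involutions sum to $0$, so the weight $0$ is never attained), of which your pair $(V_4,P_4)$ with the identity $w(v_1)=w(v_3)$ is the minimal instance; your verification of the ``only if'' direction is the standard Kaplan--Lev--Roditty argument the paper cites.
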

It was proved  $4$-tree\footnote{A $4$-tree $T$ is a rooted tree, where every vertex that is not a leaf has at least four children.} $T$ of order $n$ admits an $A^*$-antimagic labeling  for any group $A$ of order $n$ such that $|I(\Gamma)|\neq 1$ \cite{CS}. However, Kaplan et al. \cite{ref_KapLevRod} showed that, if $A$ has a unique involution, then every tree on $n$ vertices is not $A^*$-antimagic, the conjecture is not true. It is enough to consider a path $P_{2^m}$, $m>1$ and a group $A\cong(\zet_2)^m$. Since all non-zero elements in $A$ are involutions, there are not two of them that sum up to $0$. Note, that paths are not the only example of such trees, others are trees of order $n$ with maximum degree $n-2$. On the other hand, there exist  trees with vertices of degree $2$ possessing $((\zet_2)^m)^*$-antimagic labeling, an example of such tree  is shown in Figure~\ref{tree}.

 \begin{figure}[ht!]
\begin{center}
\begin{tikzpicture}[scale=0.7,style=thick,x=1cm,y=1cm]
\def\vr{3pt} % \vr = vertex radius;

% define vertices
%%%%%
%%%%%
\path (6,6) coordinate (v0);
\path (0,3) coordinate (v1);
\path (4,3) coordinate (v2);
\path (8,3) coordinate (v3);
\path (12,3) coordinate (v4);
\path (0,0) coordinate (v5);
\path (8,0) coordinate (v6);
\path (12,0) coordinate (v7);

%  edges
\draw (v0) -- (v1) -- (v5);
\draw (v0) -- (v2);
\draw (v0) -- (v3);
\draw (v0) -- (v4) -- (v7);
\draw (v3) -- (v6);

\draw (0.75,4.15) node {$(1,1,1)$};
\draw (3.75,4.15) node {$(1,0,1)$};
\draw (8.2,4.15) node {$(0,1,1)$};
\draw (11.2,4.15) node {$(0,0,1)$};
\draw (-1.15,1.15) node {$(1,0,0)$};
\draw (6.85,1.15) node {$(0,1,0)$};
\draw (13.15,1.15) node {$(1,1,0)$};

\draw (v0) [fill=black] circle (\vr);
\draw (v1) [fill=black] circle (\vr);
\draw (v2) [fill=black] circle (\vr);
\draw (v3) [fill=black] circle (\vr);
\draw (v4) [fill=black] circle (\vr);
\draw (v5) [fill=black] circle (\vr);
\draw (v6) [fill=black] circle (\vr);
\draw (v7) [fill=black] circle (\vr);

\draw[anchor = south] (v0) node {$(0,0,0)$};
\draw[anchor = east] (v1) node {$(0,1,1)$};
\draw[anchor = north] (v2) node {$(1,0,1)$};
\draw[anchor = west] (v3) node {$(0,0,1)$};
\draw[anchor = west] (v4) node {$(1,1,1)$};
\draw[anchor = north] (v5) node {$(1,0,0)$};
\draw[anchor = north] (v6) node {$(0,1,0)$};
\draw[anchor = north] (v7) node {$(1,1,0)$};
\end{tikzpicture}
\end{center}

\caption{A $((\zet_2)^3)^*$-antimagic labeling for a tree.}\label{tree}
\end{figure}

Note that an $A^*$-antimagic labeling of a tree is $E_A$-cordial labeling, but the inverse is not true. Taking the above considerations into account, we will weaken the assumptions of the definition $A^*$-antimagic labeling by letting the identity element $0$ be a label of an edge and we call such labeling $A$-antimagic. Thus a tree of order $|A|$ is  $E_A$-cordial if and only if it is $A$-cordial.

In this paper, we will generalize Theorem~\ref{Liu} by 
giving necessary and sufficient conditions for a path to being $E_k$-cordial for any $k$. Moreover, we show that any cycle $C_n$ and any path $P_n$ is $E_A$-cordial for any $A$ of odd order.   Finally, we show that a path $P_n$ is $A$-antimagic if and only if $n\not\equiv2\pmod 4$. 
\section{Preliminaries}\label{subsec:preliminaries}

The fundamental theorem of finite Abelian groups states that every finite Abelian group $\Gamma$ is isomorphic to the direct product of some cyclic subgroups of prime-power orders. In other words, there exists a positive integer $k$, (not necessarily distinct) prime numbers $\{p_i\}_{i=1}^{k}$, and positive integers $\{\alpha_i\}_{i=1}^{k}$, such that
$$A\cong\zet_{p_1^{\alpha_1}}\oplus\zet_{p_2^{\alpha_2}}\oplus\ldots\oplus\zet_{p_k^{\alpha_k}} \mathrm{, with}\; m = p_1^{\alpha_1}\cdot p_2^{\alpha_2}\cdot\ldots\cdot p_k^{\alpha_k},$$ where $m$ is the order of $A$. Moreover, this group factorization is unique (up to the order of terms in the direct product). {Since any cyclic finite group of even order has exactly one involution, if $e$ is the number of cyclic subgroups in the factorization of $\Gamma$ whose order is even, then $|I(A)|=2^e-1$.}

Because the results presented in this paper are invariant under the isomorphism between groups ($\cong$), we only need to consider one group in every isomorphism class. 

Recall that, for a prime number $p$, a group the order of which is a power of $p$ is called a \textit{$p$-group}. Given a finite Abelian group $A$, \textit{Sylow $p$-subgroup} of $A$ is the maximal subgroup $L$ of $A$ the order of which is a power of $p$. For example, by the fundamental theorem of finite Abelian groups, it is easy to see that any finite Abelian group $A$ can be factorized as $A\cong L\times H$, where $L$ is the Sylow $2$-group of $A$ and the order of $H$ is odd.

\section{Main results}
We start with simple observations.
\begin{observation}\label{cc}
A cycle $C_n$ is $A$-cordial if and only if it is $E_A$-cordial.
\end{observation}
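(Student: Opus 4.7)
The plan is to set up a natural bijection between $A$-cordial and $E_A$-cordial labelings of $C_n$ by a cyclic shift that interchanges the roles of vertices and edges. Write the vertices of $C_n$ as $v_1, v_2, \ldots, v_n$ and the edges as $e_i = v_iv_{i+1}$ for $i = 1, \ldots, n$, with all indices taken modulo $n$ (so that $v_{n+1}=v_1$ and $e_0=e_n$).

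First, given an $A$-cordial labeling $c\colon V \to A$, I would define an edge labeling $f\colon E \to A$ by $f(e_i) := c(v_i)$. A direct computation then yields
$$f^*(v_i) \;=\; f(e_{i-1}) + f(e_i) \;=\; c(v_{i-1}) + c(v_i) \;=\; c^*(e_{i-1}).$$
Since $i-1$ ranges over the same residues modulo $n$ as $i$ does, the multiset $\{f^*(v_i)\}_{i=1}^{n}$ coincides with the multiset $\{c^*(e_j)\}_{j=1}^{n}$ of induced edge labels under $c$, while clearly $\{f(e_i)\}_{i=1}^{n} = \{c(v_i)\}_{i=1}^{n}$. Consequently $e_f(a)=v_c(a)$ and $v_f(a)=e_c(a)$ for every $a\in A$, so both cordiality inequalities for $f$ are inherited from those for $c$.

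For the converse direction, given an $E_A$-cordial labeling $f\colon E \to A$, I would reverse the shift and set $c(v_i) := f(e_{i-1})$. The same computation as above shows that the induced edge labels under $c$ equal the induced vertex labels under $f$ (up to reindexing), so the two count functions swap roles and $c$ is $A$-cordial.

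The entire argument is essentially bookkeeping once the shift bijection has been identified, so there is no substantive obstacle. The only delicate point is the cyclic indexing, in particular the identification $e_0 = e_n$; this is precisely what makes the bijection a well-defined map on the full label set, and is also the reason why the analogous statement fails for paths, where the two endpoints are incident to only one edge.
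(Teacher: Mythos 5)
Your proposal is correct and follows the same route as the paper: the paper also defines $f(e_i)=c(v_i)$ along the cycle and notes that the induced vertex labels of $f$ are exactly the induced edge labels of $c$ (shifted by one index), with the converse obtained by reversing the shift. Your write-up simply makes explicit the bookkeeping that the paper leaves as "one can easily check."
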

\begin{proof}
Let $C_n=v_1e_1v_2\ldots v_ne_nv_1$, where $v_i\in V(C_n)$ and $e_j\in E(C_n)$. 
Let $c$ be $A$-cordial labeling of $C_n$. Define $f(e_i)=c(v_i)$ for $i=1,2,\ldots,n$. One can easily check that $f$ is $E_A$-cordial labeling of $C_n$.  The inverse implication can be obtained in a similar way.\end{proof}

\begin{observation}\label{cp}
If a cycle $C_n$ is $E_A$-cordial, then a path $P_n$ is also $E_A$-cordial.
\end{observation}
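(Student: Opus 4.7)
The plan is to obtain an $E_A$-cordial labeling of $P_n$ from the given $E_A$-cordial labeling $f$ of $C_n=v_1e_1v_2\ldots v_ne_nv_1$ by deleting a single edge. The crucial observation is that if the removed edge $e_j$ carries the label $0$, then each of its endpoints loses only the summand $0$, so the induced labels of \emph{all} vertices of $P_n$ coincide with their $C_n$-labels under $f$; vertex cordiality on the path is then inherited at no cost, and only the edge multiplicity of $0$ decreases by one.

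Writing $n=q|A|+r$ with $0\le r<|A|$, $E_A$-cordiality of $f$ puts $r$ labels at edge-multiplicity $q+1$ and $|A|-r$ at multiplicity $q$. Removing a $0$-edge preserves edge cordiality on $P_n$ precisely when $e_f(0)$ equals the larger multiplicity present, i.e.\ $e_f(0)=q+1$ when $r\ge 1$, or $e_f(0)=q\ge 1$ when $r=0$. The main obstacle is that the given $f$ need not place $0$ in the desired class, and may not use $0$ as a label at all (e.g.\ when $q=0$).

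I would overcome this by translating the labeling. For $b\in A$ define $g(e)=f(e)+b$; since every vertex of $C_n$ has degree $2$, one has $g^{*}(v)=f^{*}(v)+2b$, and hence $e_g(a)=e_f(a-b)$ and $v_g(a)=v_f(a-2b)$ are merely bijective shifts of the $f$-distributions. Therefore $g$ is still an $E_A$-cordial labeling of $C_n$, and $e_g(0)=e_f(-b)$. Choosing $b=-a^{*}$ for some $a^{*}$ with $e_f(a^{*})=q+1$ (when $r\ge 1$), or with $e_f(a^{*})=q\ge 1$ (when $r=0$), forces $0$ into the required multiplicity class for $g$; the residual case $n=0$ is vacuous.

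Finally, I would pick any edge $e_j$ with $g(e_j)=0$ and equip $P_n=C_n-e_j$ with the restriction of $g$. By the opening observation the induced vertex labels on $P_n$ coincide with those produced by $g$ on $C_n$ and remain cordial, while the edge multiplicity of $0$ drops by exactly one, keeping all label counts within distance $1$. The translation step is the only delicate point; once $0$ is placed in the correct multiplicity class, the rest of the verification is direct bookkeeping on edge and vertex counts.
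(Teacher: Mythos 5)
Your proposal is correct and follows essentially the same route as the paper: translate the given cordial labeling of $C_n$ so that $0$ lands in the largest edge-multiplicity class (the paper subtracts an element $g$ with $e_f(g)=\lceil n/|A|\rceil$), then delete a $0$-labeled edge, noting that the induced vertex sums are unaffected. Your additional bookkeeping (the uniform shift of vertex sums by $2b$ and the $r=0$ versus $r\ge 1$ case split) just makes explicit what the paper leaves to the reader.
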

\begin{proof}
 Let $f$ be an $E_A$-cordial labeling $f$ of $C_n$. Note that $e_{f}(a)\in\left\{\left\lfloor \frac{n}{|A|}\right\rfloor,\left\lceil\frac{n}{|A|}\right\rceil\right\}$. 
        Let $g\in A$ be such that $e_{f}(g)=\left\lceil\frac{n}{|A|}\right\rceil$. Define $f'\colon A\to E(C_n)$ such that $f'(e)=f(e)-g$. Observe that $f'$ is  an $E_A$-cordial labeling such that $e_{f'}(0)=\left\lceil\frac{n}{|A|}\right\rceil$. By deleting now an edge $e$ such that $f'(e)=0$, we obtain a path $P_n$ with $E_A$-cordial labeling.      
\end{proof}

Observation~\ref{cc} and Theorem~\ref{Cic} imply immediately the following:

\begin{corollary}\label{cycle2}
    Any cycle $C_n$  is $E_A$-cordial for any $A$ of odd order.~\qed
 \end{corollary}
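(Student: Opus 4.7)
The plan is essentially a two-line chain of implications, since the work has already been done in the two cited results. First I would invoke Theorem~\ref{Cic}, which asserts that any cycle $C_n$ with $n>2$ is $A$-cordial whenever $|A|$ is odd. This takes care of the ``$A$-cordial'' side of the statement immediately, with no case distinction needed, because the hypothesis of odd order is exactly what Theorem~\ref{Cic} requires for cycles.

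Then I would apply Observation~\ref{cc}, which provides the equivalence ``$C_n$ is $A$-cordial iff $C_n$ is $E_A$-cordial'' for any finite Abelian group $A$. The forward direction of that equivalence is what is needed here: given the $A$-cordial labeling $c$ supplied by Theorem~\ref{Cic}, the construction in the proof of Observation~\ref{cc}, namely setting $f(e_i) := c(v_i)$ along the cyclic ordering $v_1e_1v_2\ldots v_n e_n v_1$, yields an $E_A$-cordial labeling of $C_n$.

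Since both ingredients are already in hand, there is no real obstacle; the argument is simply the concatenation of Theorem~\ref{Cic} and Observation~\ref{cc}. The only borderline case to acknowledge is the degenerate range $n\le 2$, which is outside the scope of a cycle in the usual sense and matches the hypothesis $n>2$ in Theorem~\ref{Cic}; under the standard convention $n\ge 3$ for cycles, no additional handling is required, and the ``$\qed$'' mark placed immediately after the statement in the text is therefore justified.
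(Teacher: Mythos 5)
Your proposal is correct and matches the paper's argument exactly: the paper derives Corollary~\ref{cycle2} immediately from Theorem~\ref{Cic} (cycles of odd-order groups are $A$-cordial) combined with Observation~\ref{cc} (a cycle is $A$-cordial iff it is $E_A$-cordial). Nothing further is needed.
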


 By above Corollary~\ref{cycle2} and Observation~\ref{cp} we obtain the following. 

\begin{corollary}\label{oddpaths}
    Any path $P_n$  is $E_A$-cordial for any $A$ of odd order.~\qed
\end{corollary}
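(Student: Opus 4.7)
The plan is to chain together the two statements immediately preceding the corollary. First, by Corollary~\ref{cycle2}, for every $n\geq 3$ and every finite Abelian group $A$ of odd order, the cycle $C_n$ admits an $E_A$-cordial labeling. Second, by Observation~\ref{cp}, the existence of an $E_A$-cordial labeling of $C_n$ implies the existence of an $E_A$-cordial labeling of $P_n$, since from any labeling $f$ of $C_n$ one obtains $f'(e)=f(e)-g$ (where $g$ is a most-frequent edge label under $f$) and then deletes an edge with $f'$-value $0$. Composing these two implications gives $P_n$ an $E_A$-cordial labeling, as required.

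There is essentially no obstacle here, as both ingredients are already in place. The only things worth double-checking are that (i) the translation $f'(e)=f(e)-g$ preserves both the edge-cordiality (since translation is a bijection on $A$) and the induced vertex-cordiality (since every vertex of $C_n$ has degree two, so each induced vertex label is shifted uniformly by $-2g$), and (ii) after deleting one edge $e$ with $f'(e)=0$, the induced labels at the two endpoints of $e$ are unchanged (because adding $0$ does not alter the sum), while the multiset of edge labels on the remaining $n-1$ edges is still cordial. Both verifications are elementary and were already baked into the proof of Observation~\ref{cp}, so only a one-line citation of the two previous results is required.

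One might wonder whether a more direct construction on the path is preferable, but given that the cycle case (Corollary~\ref{cycle2}) is handled through the correspondence with $A$-cordial labelings of cycles via Observation~\ref{cc}, and that Theorem~\ref{Cic} already covers all cycles for $A$ of odd order, going via $C_n$ is by far the shortest route. Accordingly, my proof is simply the two-line deduction indicated by the $\qed$ symbol placed after the statement.
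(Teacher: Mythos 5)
Your proposal is correct and is exactly the paper's argument: the corollary is obtained by combining Corollary~\ref{cycle2} with Observation~\ref{cp}, and the verifications you mention are already contained in the proof of Observation~\ref{cp}.
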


We will focus now on groups with even order. 

\begin{theorem}\label{ant}
    Let $A\cong \zet_{4m}\oplus H$ for $m>1$ and $|H|=k\geq 1$ odd, $n=|A|$. There exists an $E_A$-cordial labeling of $P_n$.
\end{theorem}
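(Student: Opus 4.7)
My plan is to construct an explicit $E_A$-cordial edge labeling of $P_n$, proceeding in two stages: first I handle the base case $H$ trivial (so $A\cong\zet_{4m}$ and $n=4m$), then extend to general odd $|H|=k$ by a block decomposition.

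A preliminary sum-parity calculation pins down the missing edge label. Denoting the edge labels $\epsilon_1,\ldots,\epsilon_{n-1}$ and induced vertex labels $\ell_1,\ldots,\ell_n$, one has $\sum_i\ell_i=2\sum_i\epsilon_i$, since each edge contributes its label to two vertex sums. An $E_A$-cordial labeling on $n=|A|$ vertices forces $\{\ell_i\}=A$ and $\{\epsilon_i\}=A\setminus\{x\}$ for some missing $x$, so $\sum_{a\in A}a=2\bigl(\sum_{a\in A}a-x\bigr)$; since the unique involution of $A$ is $(2m,0)$ and all non-involutions pair with their inverses, this reduces to $2x=(2m,0)$, forcing $x\in\{(m,0),(3m,0)\}$. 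I would target $x=(m,0)$.

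For stage one I would prescribe a permutation $\ell_1,\ldots,\ell_{4m}$ of $\zet_{4m}$ and read off edge labels through the telescoping recursion $\epsilon_1=\ell_1$, $\epsilon_i=\ell_i-\epsilon_{i-1}$ (which is forced by $\ell_i=\epsilon_{i-1}+\epsilon_i$). A natural template is to interleave the $2m$ cosets of $\langle 2m\rangle=\{0,2m\}$ in a zig-zag pattern whose telescoping sweep of $\zet_{4m}$ lands on every element except $m$. The hypothesis $m>1$ is what ensures the coset interleaving has enough room to avoid edge-label collisions; the case $m=1$ is presumably treated separately. Stage two then partitions $P_{4mk}$ into $k$ blocks of $4m$ consecutive vertices joined by $k-1$ bridging edges. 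Inside block $B_j$ I would use the stage-one $\zet_{4m}$-labeling and attach a fixed $h_j\in H$ as the $H$-coordinate of every inside-block edge, choosing the bridging edges' $H$-coordinates so that the induced vertex-label $H$-coordinates in $B_j$ all collapse to a single $c_j\in H$, with $\{c_1,\ldots,c_k\}=H$. Because $|H|$ is odd the doubling map $h\mapsto 2h$ is a bijection of $H$, and this lets me invert the $H$-side of the vertex constraints and solve for the $h_j$'s and the bridging $H$-values.

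The main obstacle is that the bridging edges perturb the vertex labels at block boundaries, potentially breaking the stage-one distinctness inside a block. I expect to absorb these perturbations by recording the two affected vertices per block and compensating through the choice of $h_j$ and the bridging $H$-coordinate, again exploiting the bijectivity of doubling on $H$. Verification then amounts to checking globally that the edge labels realize $A\setminus\{(m,0)\}$ and the vertex labels realize all of $A$, which I anticipate reduces to routine bookkeeping once the block labeling is fixed.
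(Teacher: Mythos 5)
Your preliminary computation is correct (for $n=|A|$ the vertex labels must be a permutation of $A$, the edge labels must be $A\setminus\{x\}$, and $2x=(2m,0)$ forces $x\in\{(m,0),(3m,0)\}$), and your overall architecture — a base labeling for $\zet_{4m}$ plus a decomposition of $P_{4mk}$ into $k$ blocks with the $H$-coordinate handled via bijectivity of doubling on the odd-order group $H$ — is the same general shape as the paper's construction. However, as written the proposal has two genuine gaps. First, the base case is never actually constructed: you propose to prescribe a permutation $\ell_1,\dots,\ell_{4m}$ of $\zet_{4m}$ and telescope, but prescribing all vertex labels overdetermines the system (one needs $\ell_{4m}=\epsilon_{4m-1}$, i.e.\ the alternating-sum consistency condition) and, more importantly, the distinctness of the telescoped edge labels and the fact that exactly the element $m$ (or $3m$) is omitted is precisely the nontrivial content; asserting that a ``zig-zag coset interleaving'' with ``enough room'' exists for $m>1$ is not a proof. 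The paper gives explicit formulas for the edge labels and verifies the induced vertex labels case by case.

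Second, the gluing scheme is internally inconsistent as stated. If every internal edge of block $B_j$ carries $H$-coordinate $h_j$ and you demand that \emph{all} vertex labels in $B_j$ have $H$-coordinate $c_j=2h_j$, then the bridge between $B_j$ and $B_{j+1}$ must have $H$-value $h_j$ (to serve $B_j$'s last vertex) and simultaneously $h_{j+1}$ (to serve $B_{j+1}$'s first vertex), forcing all $h_j$ equal and contradicting $\{c_1,\dots,c_k\}=H$ for $k>1$. You acknowledge the boundary perturbation and say you ``expect to absorb'' it, but that compensation is exactly the heart of the matter and is not supplied. The paper resolves it differently: it takes an $E_H$-cordial labeling $f_1$ of the cycle $C_k$ (which exists because $|H|$ is odd), assigns $H$-coordinate $f_1(w_jw_{j+1})$ to the internal edges of block $j$ and $f_1(w_{j+1}w_{j+2})$ to the bridge after block $j$; then the boundary vertices do \emph{not} share the block's $H$-coordinate — instead their $H$-sums are the $k$ distinct induced vertex labels of the cycle, i.e.\ they sweep out all of $H$, while the $\zet_{4m}$-coordinates are arranged so that one residue ($2m-1$) is reserved exclusively for these boundary vertices and the remaining residues, paired with the doubled edge labels $2f_1(w_jw_{j+1})$, cover the rest of $A$. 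Without either this cyclic device or an explicit alternative compensation, your stage two does not go through.
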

\begin{proof}

  By Observation~\ref{cc} and Corollary~\ref{cycle2} there exists an $E_H$-cordial labeling $f_1$ of a cycle $C_k=w_0w_1\ldots w_{k-1}w_0$. Without loss of generality, we can assume that $f_1(w_0w_1)=0$.

Break the path $P_{4mk}$ into $k$ blocks of $4m$ vertices.  Namely, let
$$P_{4mk}=v_{0,0},v_{0,1}\ldots,v_{0,4m-1},v_{1,0},v_{1,1}\ldots,v_{1,4m-1},\ldots v_{k-1,0},v_{k-1,1},\ldots,v_{k-1,4m-1}.$$

Set $$f(v_{j,4m-1}v_{j+1,0})=\begin{cases}(0,f_1(w_{j+1}w_{j+2}))& \mathrm{for}\;\; j\;\;\mathrm{even},\\
(2m,f_1(w_{j+1}w_{j+2}))& \mathrm{for}\;\; j\;\;\mathrm{odd}.\end{cases}$$

Let $$f(v_{0,i}v_{0,i+1})=\begin{cases}(i/2,f_1(w_0w_1))& \mathrm{for}\;\; i=0,2,\ldots,4m-2\\
(2m+(i-1)/2,f_1(w_0w_1))& \mathrm{for}\;\; i=1,3,\ldots,2m-1,\\
(2m+1+(i-1)/2,f_1(w_0w_1))& \mathrm{for}\;\; i=2m+1,2m+3,\ldots,4m-3,\\
\end{cases}$$
Observe that  
$$f^*(v_{0,i})=\begin{cases}
(0,0)&\mathrm{for}\; i=0,\\
(2m-1+i,0)&\mathrm{for}\; i=1,2,\ldots,2m,\\
(2m+i,0)&\mathrm{for}\; i=2m+1,2m+2,\ldots,4m-2.\end{cases}$$

For $j$ odd set
$$f(v_{j,i}v_{j,i+1})=\begin{cases}(2m+i/2,f_1(w_{j}w_{j+1}))& \mathrm{for}\;\; i=0,2,\ldots,4m-2\\
(1+(i-1)/2,f_1(w_jw_{j+1}))& \mathrm{for}\;\; i=1,3,\ldots,4m-3.\\
\end{cases}$$
Whereas for $j>0$ even let
$$f(v_{j,i}v_{j,i+1})=\begin{cases}(i/2,f_1(w_{j}w_{j+1}))& \mathrm{for}\;\; i=0,2,\ldots,4m-2\\
(2m+1+(i-1)/2,f_1(w_jw_{j+1}))& \mathrm{for}\;\; i=1,3,\ldots,4m-3.\\
\end{cases}$$
Observe that    
$f^*(v_{j,i})=(2m+i,2f_1(w_{j}w_{j+1}))$ for $i=0,1,\ldots,4m-2$ and $j=1,2,\ldots,k-1$ and $f^*(v_{j,4m-1})=(2m-1,f_1(w_{j}w_{j+1}+f_1(w_{j+1}w_{j+2}))$ for $j=0,1,\ldots,k-1$, where the subscripts are taken modulo $k$.\end{proof}

In  Figure~\ref{c12} we show an $E_A$-cordial labeling for $C_{12}$, where $A=\zet_4\oplus\zet_3$, $m=2$, $k=3$, whereas in Figure we show an $E_A$-cordial labeling for $C_{12}$, where $A=\zet_{24}$, $m=6$, $k=1$.

 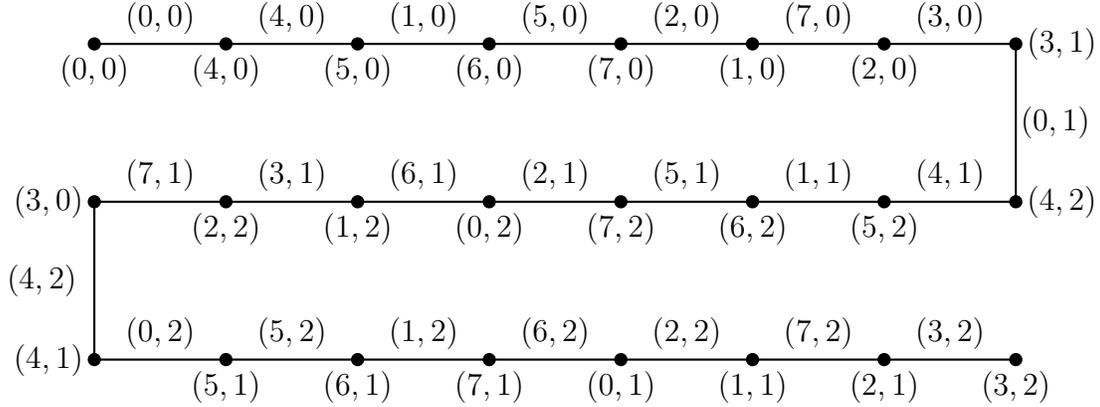
\begin{figure}[ht!]
\begin{center}
\begin{tikzpicture}[scale=0.7,style=thick,x=1cm,y=1cm]
\def\vr{3pt} % \vr = vertex radius;

% define vertices
%%%%%
%%%%%
\path (0,6) coordinate (v0);
\path (2.5,6) coordinate (v1);
\path (5,6) coordinate (v2);
\path (7.5,6) coordinate (v3);
\path (10,6) coordinate (v4);
\path (12.5,6) coordinate (v5);
\path (15,6) coordinate (v6);
\path (17.5,6) coordinate (v7);
\path (0,3) coordinate (v8);
\path (2.5,3) coordinate (v9);
\path (5,3) coordinate (v10);
\path (7.5,3) coordinate (v11);
\path (10,3) coordinate (v12);
\path (12.5,3) coordinate (v13);
\path (15,3) coordinate (v14);
\path (17.5,3) coordinate (v15);
\path (0,0) coordinate (v16);
\path (2.5,0) coordinate (v17);
\path (5,0) coordinate (v18);
\path (7.5,0) coordinate (v19);
\path (10,0) coordinate (v20);
\path (12.5,0) coordinate (v21);
\path (15,0) coordinate (v22);
\path (17.5,0) coordinate (v23);

%  edges
\draw (v0) -- (v1) -- (v2) -- (v3) -- (v4) -- (v5) -- (v6) -- (v7);

\draw (v8) -- (v9) -- (v10) -- (v11) -- (v12) -- (v13) -- (v14) -- (v15);
\draw  (v16) -- (v17) -- (v18) -- (v19) -- (v20) -- (v21) -- (v22) -- (v23);

 \draw (v8) -- (v16);

 \draw (v7) -- (v15);

\draw (1.25,6.5) node {$(0,0)$};
\draw (3.75,6.5) node {$(4,0)$};
\draw (6.25,6.5) node {$(1,0)$};
\draw (8.75,6.5) node {$(5,0)$};
\draw (11.25,6.5) node {$(2,0)$};
\draw (13.75,6.5) node {$(7,0)$};
\draw (16.25,6.5) node {$(3,0)$};
\draw (18.25,4.5) node {$(0,1)$};
\draw (1.25,3.5) node {$(7,1)$};
\draw (3.75,3.5) node {$(3,1)$};
\draw (6.25,3.5) node {$(6,1)$};
\draw (8.75,3.5) node {$(2,1)$};
\draw (11.25,3.5) node {$(5,1)$};
\draw (13.75,3.5) node {$(1,1)$};
\draw (16.25,3.5) node {$(4,1)$};
\draw (-1,1.5) node {$(4,2)$};
\draw (1.25,0.5) node {$(0,2)$};
\draw (3.75,0.5) node {$(5,2)$};
\draw (6.25,0.5) node {$(1,2)$};
\draw (8.75,0.5) node {$(6,2)$};
\draw (11.25,0.5) node {$(2,2)$};
\draw (13.75,0.5) node {$(7,2)$};
\draw (16.25,0.5) node {$(3,2)$};

\draw (v0) [fill=black] circle (\vr);
\draw (v1) [fill=black] circle (\vr);
\draw (v2) [fill=black] circle (\vr);
\draw (v3) [fill=black] circle (\vr);
\draw (v4) [fill=black] circle (\vr);
\draw (v5) [fill=black] circle (\vr);
\draw (v6) [fill=black] circle (\vr);
\draw (v7) [fill=black] circle (\vr);
\draw (v8) [fill=black] circle (\vr);
\draw (v9) [fill=black] circle (\vr);
\draw (v10) [fill=black] circle (\vr);
\draw (v11) [fill=black] circle (\vr);
\draw (v12) [fill=black] circle (\vr);
\draw (v13) [fill=black] circle (\vr);
\draw (v14) [fill=black] circle (\vr);
\draw (v15) [fill=black] circle (\vr);
\draw (v16) [fill=black] circle (\vr);
\draw (v17) [fill=black] circle (\vr);
\draw (v18) [fill=black] circle (\vr);
\draw (v19) [fill=black] circle (\vr);
\draw (v20) [fill=black] circle (\vr);
\draw (v21) [fill=black] circle (\vr);
\draw (v22) [fill=black] circle (\vr);
\draw (v23) [fill=black] circle (\vr);

\draw[anchor = north] (v0) node {$(0,0)$};
\draw[anchor = north] (v1) node {$(4,0)$};
\draw[anchor = north] (v2) node {$(5,0)$};
\draw[anchor = north] (v3) node {$(6,0)$};
\draw[anchor = north] (v4) node {$(7,0)$};
\draw[anchor = north] (v5) node {$(1,0)$};
\draw[anchor = north] (v6) node {$(2,0)$};
\draw[anchor = west] (v7) node {$(3,1)$};
\draw[anchor = west] (v15) node {$(4,2)$};
\draw[anchor = north] (v14) node {$(5,2)$};
\draw[anchor = north] (v13) node {$(6,2)$};
\draw[anchor = north] (v12) node {$(7,2)$};
\draw[anchor = north] (v11) node {$(0,2)$};
\draw[anchor = north] (v10) node {$(1,2)$};
\draw[anchor = north] (v9) node {$(2,2)$};
\draw[anchor = east] (v8) node {$(3,0)$};
\draw[anchor = east] (v16) node {$(4,1)$};
\draw[anchor = north] (v17) node {$(5,1)$};
\draw[anchor = north] (v18) node {$(6,1)$};
\draw[anchor = north] (v19) node {$(7,1)$};
\draw[anchor = north] (v20) node {$(0,1)$};
\draw[anchor = north] (v21) node {$(1,1)$};
\draw[anchor = north] (v22) node {$(2,1)$};
\draw[anchor = north] (v23) node {$(3,2)$};
\end{tikzpicture}
\end{center}
\caption{An $E_{\zet_8\oplus\zet_3}$-cordial labeling for $P_{24}$.}\label{c12}
\end{figure}

 \begin{figure}[ht!]
\begin{center}
\begin{tikzpicture}[scale=0.7,style=thick,x=1cm,y=1cm]
\def\vr{3pt} % \vr = vertex radius;

% define vertices
%%%%%
%%%%%
\path (0,6) coordinate (v0);
\path (2.5,6) coordinate (v1);
\path (5,6) coordinate (v2);
\path (7.5,6) coordinate (v3);
\path (10,6) coordinate (v4);
\path (12.5,6) coordinate (v5);
\path (15,6) coordinate (v6);
\path (17.5,6) coordinate (v7);
\path (0,3) coordinate (v8);
\path (2.5,3) coordinate (v9);
\path (5,3) coordinate (v10);
\path (7.5,3) coordinate (v11);
\path (10,3) coordinate (v12);
\path (12.5,3) coordinate (v13);
\path (15,3) coordinate (v14);
\path (17.5,3) coordinate (v15);
\path (0,0) coordinate (v16);
\path (2.5,0) coordinate (v17);
\path (5,0) coordinate (v18);
\path (7.5,0) coordinate (v19);
\path (10,0) coordinate (v20);
\path (12.5,0) coordinate (v21);
\path (15,0) coordinate (v22);
\path (17.5,0) coordinate (v23);

%  edges
\draw (v0) -- (v1) -- (v2) -- (v3) -- (v4) -- (v5) -- (v6) -- (v7);

\draw (v8) -- (v9) -- (v10) -- (v11) -- (v12) -- (v13) -- (v14) -- (v15);
\draw  (v16) -- (v17) -- (v18) -- (v19) -- (v20) -- (v21) -- (v22) -- (v23);

 \draw (v8) -- (v16);

 \draw (v7) -- (v15);

\draw (1.25,6.5) node {$0$};
\draw (3.75,6.5) node {$12$};
\draw (6.25,6.5) node {$1$};
\draw (8.75,6.5) node {$13$};
\draw (11.25,6.5) node {$2$};
\draw (13.75,6.5) node {$14$};
\draw (16.25,6.5) node {$3$};
\draw (18.25,4.5) node {$15$};
\draw (1.25,3.5) node {$7$};
\draw (3.75,3.5) node {$19$};
\draw (6.25,3.5) node {$6$};
\draw (8.75,3.5) node {$17$};
\draw (11.25,3.5) node {$5$};
\draw (13.75,3.5) node {$16$};
\draw (16.25,3.5) node {$4$};
\draw (-1,1.5) node {$20$};
\draw (1.25,0.5) node {$8$};
\draw (3.75,0.5) node {$21$};
\draw (6.25,0.5) node {$9$};
\draw (8.75,0.5) node {$22$};
\draw (11.25,0.5) node {$10$};
\draw (13.75,0.5) node {$23$};
\draw (16.25,0.5) node {$11$};

\draw (v0) [fill=black] circle (\vr);
\draw (v1) [fill=black] circle (\vr);
\draw (v2) [fill=black] circle (\vr);
\draw (v3) [fill=black] circle (\vr);
\draw (v4) [fill=black] circle (\vr);
\draw (v5) [fill=black] circle (\vr);
\draw (v6) [fill=black] circle (\vr);
\draw (v7) [fill=black] circle (\vr);
\draw (v8) [fill=black] circle (\vr);
\draw (v9) [fill=black] circle (\vr);
\draw (v10) [fill=black] circle (\vr);
\draw (v11) [fill=black] circle (\vr);
\draw (v12) [fill=black] circle (\vr);
\draw (v13) [fill=black] circle (\vr);
\draw (v14) [fill=black] circle (\vr);
\draw (v15) [fill=black] circle (\vr);
\draw (v16) [fill=black] circle (\vr);
\draw (v17) [fill=black] circle (\vr);
\draw (v18) [fill=black] circle (\vr);
\draw (v19) [fill=black] circle (\vr);
\draw (v20) [fill=black] circle (\vr);
\draw (v21) [fill=black] circle (\vr);
\draw (v22) [fill=black] circle (\vr);
\draw (v23) [fill=black] circle (\vr);

\draw[anchor = north] (v0) node {$0$};
\draw[anchor = north] (v1) node {$12$};
\draw[anchor = north] (v2) node {$13$};
\draw[anchor = north] (v3) node {$14$};
\draw[anchor = north] (v4) node {$15$};
\draw[anchor = north] (v5) node {$16$};
\draw[anchor = north] (v6) node {$17$};
\draw[anchor = west] (v7) node {$18$};
\draw[anchor = west] (v15) node {$19$};
\draw[anchor = north] (v14) node {$20$};
\draw[anchor = north] (v13) node {$21$};
\draw[anchor = north] (v12) node {$22$};
\draw[anchor = north] (v11) node {$23$};
\draw[anchor = north] (v10) node {$1$};
\draw[anchor = north] (v9) node {$2$};
\draw[anchor = east] (v8) node {$3$};
\draw[anchor = east] (v16) node {$4$};
\draw[anchor = north] (v17) node {$5$};
\draw[anchor = north] (v18) node {$6$};
\draw[anchor = north] (v19) node {$7$};
\draw[anchor = north] (v20) node {$8$};
\draw[anchor = north] (v21) node {$9$};
\draw[anchor = north] (v22) node {$10$};
\draw[anchor = north] (v23) node {$11$};
\end{tikzpicture}
\end{center}

\caption{An $E_{\zet_{24}}$-cordial labeling for $P_{24}$.}\label{c12}
\end{figure}

\begin{observation}\label{product}
Let $A=B\oplus H$ be of order $n$. If a tree $T$ of order $n$ is $E_A$-cordial, then it is $E_B$-cordial.
\end{observation}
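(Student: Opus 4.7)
The plan is to push the $E_A$-cordial labeling forward through the canonical projection. Since $A = B\oplus H$, there is a natural group homomorphism $\pi\colon A\to B$ defined by $\pi(b,h)=b$; every fiber $\pi^{-1}(b)$ has exactly $|H|$ elements. Given an $E_A$-cordial labeling $f\colon E(T)\to A$, I define $f_B=\pi\circ f\colon E(T)\to B$ and show this is $E_B$-cordial.

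First I would unpack what $E_A$-cordiality gives us when $|A|=n$ and $T$ is a tree on $n$ vertices (so $|E(T)|=n-1$). The condition $|e_f(a)-e_f(c)|\le 1$ with $n-1$ edges distributed among $n$ labels forces $e_f(a)\in\{0,1\}$ with exactly one element of $A$ missed and every other used once. The condition $|v_f(a)-v_f(c)|\le 1$ with $n$ vertices and $n$ labels forces $f^*\colon V(T)\to A$ to be a bijection.

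Next I would check the two cordiality conditions for $f_B$. For the edge counts, for each $b\in B$ we have $e_{f_B}(b)=\sum_{a\in\pi^{-1}(b)}e_f(a)$; since each fiber of $\pi$ has size $|H|$ and the unique missed element of $A$ lies in exactly one fiber, we get $e_{f_B}(b)=|H|-1$ for one value of $b$ and $e_{f_B}(b)=|H|$ for all others, so the counts differ by at most one. For the vertex counts, the key identity is
\[
f_B^*(v)=\sum_{u\in N(v)}\pi(f(u))=\pi\!\left(\sum_{u\in N(v)}f(u)\right)=\pi(f^*(v)),
\]
which uses that $\pi$ is a homomorphism. Because $f^*$ is a bijection $V(T)\to A$ and each fiber of $\pi$ has size $|H|$, every $b\in B$ satisfies $v_{f_B}(b)=|H|$, so the vertex counts are in fact all equal.

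There is essentially no serious obstacle here; the whole argument is the observation that projection commutes with neighborhood sums and distributes uniformly across fibers. The only place that required a moment's thought was confirming the edge-count discrepancy stays within one, which is immediate once one notes that a single missing element of $A$ lies in a single fiber of $\pi$.
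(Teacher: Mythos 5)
Your proof is correct and takes essentially the same approach as the paper: project the $E_A$-cordial labeling onto $B$, then use that on a tree of order $|A|$ the induced vertex labels form a bijection onto $A$ (so the projected vertex classes all have size $|H|$) and exactly one edge label of $A$ is unused (so the projected edge classes differ by at most one). The only difference is that you make explicit the identity $f_B^*(v)=\pi(f^*(v))$, which the paper's proof uses implicitly.
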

\begin{proof}
Suppose that $f$ is $E_H$-cordial labeling of $T$. If now $f(e)=(b,h)$ for $b\in B$ and $h\in H$, then define a labeling $f'\colon E(T)\to B$ by $f'(e)=b$. Obviously $|v_{f'}(a)-v_{f'}(b)| =0$, since $v_{f}(a)=1$ for any $a\in A$. Moreover, because there exists exactly one $a\in A$ such that $e_{f}(a)=0$  we obtain that $|e_{f'}(a)-e_{f'}(b)|\leq 1$ for any
$a,b \in B$.
\end{proof}

\begin{observation}\label{2mod4}
 If a tree $T$ has $n\equiv 2\pmod 4$ vertices, then it is not $E_A$-cordial for any $A$ of order $|A|\equiv2\pmod 4$.
\end{observation}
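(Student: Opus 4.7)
The plan is to reduce this to the $\zet_2$-cordial case already handled by Theorem~\ref{Yil}, using Observation~\ref{product} as the reduction device.

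The first step is the structural input, which follows from the fundamental theorem of finite abelian groups recalled in Section~\ref{subsec:preliminaries}: when $|A|\equiv 2\pmod 4$, the Sylow $2$-subgroup of $A$ has order exactly $2$, so we may write $A\cong \zet_2\oplus H$ with $|H|$ odd. In particular $\zet_2$ appears as a direct summand of $A$, which is precisely the setup under which Observation~\ref{product} applies with $B=\zet_2$.

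With this decomposition in hand, the argument is by contradiction. Suppose that some $E_A$-cordial labeling $f\colon E(T)\to A$ exists. Invoking Observation~\ref{product} in the natural regime $|V(T)|=|A|$ (the setting in which this observation is used in the sequel, in particular for the $A$-antimagic application to paths), the composition $f'=\pi_{\zet_2}\circ f$ of $f$ with the projection onto the $\zet_2$-coordinate yields an $E_{\zet_2}$-cordial labeling of $T$. But since $T$ has $n\equiv 2\pmod 4$ vertices, Theorem~\ref{Yil} forbids any graph on that many vertices from admitting an $E_{\zet_2}$-cordial labeling. This contradicts the previous step and finishes the proof.

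There is no serious combinatorial obstacle here once the right ingredients are identified: Observation~\ref{product} already performs the reduction, and Theorem~\ref{Yil} already supplies the $\zet_2$-obstruction; the observation simply chains them. The only mild point requiring care is verifying the hypothesis of Observation~\ref{product}, namely that the tree and the group have the same order, which is the regime in which this observation is needed later in the paper.
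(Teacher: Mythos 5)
Your proof is essentially the paper's own argument: decompose $A\cong\zet_2\oplus H$ with $|H|$ odd, project onto the $\zet_2$-summand via Observation~\ref{product} to obtain an $E_{\zet_2}$-cordial labeling of $T$, and contradict Theorem~\ref{Yil}. Your explicit restriction to the regime $|V(T)|=|A|$ is exactly the (implicit) hypothesis under which the paper applies Observation~\ref{product} as well, so the two proofs coincide.
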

\begin{proof}
By fundamental theorem on abstract algebra $A\cong \zet_2\oplus H$, where $|H|$ is odd. Therefore, if there exists an $E_A$-cordial-labeling of $G$, then by Observation~\ref{product} there exists a $E_2$-cordial labeling of $G$, which is impossible by Theorem~\ref{Yil}.
\end{proof}

We will show now the necessary and sufficient conditions for a path $P_n$ to be $E_k$-cordial.

\begin{theorem}\label{cyclic}  A path $P_n$ is $E_k$-cordial if and only if $k\not\equiv2\pmod 4$  or  $n$ is
not an odd multiple of $k$.
\end{theorem}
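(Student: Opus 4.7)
The plan is to prove the two directions separately, reducing each case where possible to earlier results and isolating the one case that requires genuine work. For necessity, if $k\equiv 2\pmod 4$ and $n=(2s+1)k$ then $n=(2s+1)(4t+2)\equiv 2\pmod 4$ (since $2s+1$ is odd), so Observation~\ref{2mod4} applied with $A=\zet_k$, which has $|A|\equiv 2\pmod 4$, rules out $E_{\zet_k}$-cordiality of $P_n$.

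For sufficiency, the hypothesis ``$k\not\equiv 2\pmod 4$ or $n$ is not an odd multiple of $k$'' splits into three cases. If $k$ is odd, Corollary~\ref{oddpaths} applies directly. If $k$ is even but $n$ is not an odd multiple of $k$, Theorem~\ref{cycle} gives that $C_n$ is $\zet_k$-cordial, Observation~\ref{cc} converts this into $E_{\zet_k}$-cordiality of $C_n$, and Observation~\ref{cp} transfers the labeling to $P_n$. The substantive case is $k\equiv 0\pmod 4$ with $n=(2s+1)k$: writing $k=4m$ and setting $A=\zet_{4m}\oplus\zet_{2s+1}$ of order $n$, Theorem~\ref{ant} furnishes an $E_A$-cordial labeling of $P_n$, and Observation~\ref{product} projects it down to an $E_{\zet_k}$-cordial labeling of $P_n$.

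The main obstacle is that Theorem~\ref{ant} as stated requires $m>1$, and its construction begins by choosing an $E_H$-cordial labeling of a cycle $C_{|H|}$, which implicitly needs $|H|\ge 3$. Thus the boundary subcases $k=4$ (so $m=1$) and $s=0$ (so $|H|=1$, whereupon the auxiliary group collapses to $\zet_k$ itself) are not covered by the reduction above. I would dispatch them by checking directly that the block construction inside the proof of Theorem~\ref{ant} continues to work verbatim for $m=1$ provided $|H|\ge 3$ (a routine verification of the edge- and vertex-label formulas), and by extracting an explicit $E_{\zet_{4m}}$-cordial labeling of $P_{4m}$ from the first-block portion of that construction; the smallest exceptional case $P_4$ with $\zet_4$ can be dispatched by hand, e.g.\ edges labeled $0,2,3$ produce vertex sums $0,2,1,3$, which cover $\zet_4$.
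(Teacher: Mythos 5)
Your proposal is correct and follows essentially the same route as the paper: Theorem~\ref{cycle} together with Observations~\ref{cc} and~\ref{cp} for the cases where $k$ is odd or $n$ is not an odd multiple of $k$, Theorem~\ref{ant} plus Observation~\ref{product} applied to $\zet_k\oplus\zet_{n/k}$ for $k\equiv 0\pmod 4$, and Observation~\ref{2mod4} for the necessity direction. The only divergence is at the boundary subcases: the paper dispatches $n=k=4$ by an explicit labeling and handles $k=4$, $n>4$ via the isomorphism $\zet_4\oplus\zet_m\cong\zet_{4m}$, thereby staying inside the stated hypotheses of Theorem~\ref{ant} (whose statement allows $|H|=1$) instead of re-verifying the block construction at $m=1$ as you propose, but both treatments are valid.
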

\begin{proof}
If $k$ is odd or $n$ is not an odd multiple of $k$ then we are done by Theorem~\ref{cycle}, Observation~\ref{cc} and Observation~\ref{cp}. Assume now that $k\equiv 0\pmod 4$ and $n$ is
 an odd multiple of $k$. Thus $n=4m$ for some odd $m$.  Suppose first that $k=4$. If  also $n=4$, then for a path $P_4=v_1v_2v_3v_4$, the labeling defined as $f(v_1v_2)=0$, $f(v_2v_3)=1$ and $f(v_3v_4)=2$ is $E_4$-cordial. Assume now that $n>4$.
  Let $A=\zet_4\oplus \zet_m\cong \zet_{4m}$. There exists $E_{A}$-cordial labeling of $P_n$ by Theorem~\ref{ant}. Thus Observation~\ref{product} implies that there exists $E_4$-cordial labeling of $P_n$.
  Let now $k>4$, then let $A=\zet_k\oplus \zet_m$ and we are done as above.
  
  If now $k\equiv2\pmod 4$  and  $n$ is
 an odd multiple of $k$, then  $n\equiv2\pmod 4$ and we are done by Observation~ \ref{2mod4}. \end{proof}

\section{$A$-antimagic labeling of paths}

A useful tool for us is a concept of sum-rainbow Hamiltonian cycles on Abelian groups \cite{Lev}. Consider a finite Abelian group $A$ and the complete graph on the set of all elements of
$A$.  In this graph find a Hamiltonian cycle and calculate the sum for each pair of consecutive vertices along the cycle. The goal is to determine the maximum number of distinct sums, denoted as $\sigma_{\max}(A)$, that can arise from this process. 
\begin{theorem}[\cite{Lev}]\label{Lev}
For any finite non-trivial Abelian group $A$ we have
$$\sigma_{\max}(A)=\begin{cases}
|A|& if\; A\not \cong (\zet_2)^m,\; |I(A)|\neq 1;\\
|A|-1& if \; |I(A)|=1;\\
|A|-2& if \; A\cong (\zet_2)^m, \;  |I(A)|\neq 1;
\end{cases}$$
\end{theorem}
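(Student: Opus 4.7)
The plan is to separate the upper-bound argument, which is essentially a counting identity, from the constructive lower bounds, which carry the real combinatorial content.

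For the upper bound, let $a_0, a_1, \ldots, a_{n-1}$ be any Hamiltonian ordering of $A$ and set $s_i = a_i + a_{i+1}$ with indices mod $n$. Summing gives
$$\sum_{i=0}^{n-1} s_i = 2\sum_{a\in A} a = 2\sigma_A.$$
The quantity $\sigma_A = \sum_{a \in A} a$ is always an involution or zero, and a short structural calculation shows $\sigma_A = \iota$ when $A$ has a unique involution $\iota$ and $\sigma_A = 0$ otherwise. If $|I(A)| = 1$, then all $n$ sums being distinct would force $\sum s_i = \sigma_A = \iota \neq 0$, contradicting $2\sigma_A = 0$; hence $\sigma_{\max}(A) \leq |A|-1$. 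If $A \cong (\zet_2)^m$ with $m>1$, every nonzero element is an involution, so $a_i + a_{i+1} = 0$ would force $a_i = a_{i+1}$, which is impossible; thus $0$ is never a consecutive sum. To rule out even $|A|-1$ distinct sums, observe that if each nonzero element appeared at least once with a single repeated value $r\in A^*$, then $\sum s_i = \sigma_{A^*} + r = r$, which must equal $0$, a contradiction. Therefore $\sigma_{\max}(A) \leq |A|-2$ in this case.

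For the matching constructions my plan is to proceed by the shape of the Sylow $2$-subgroup. For odd $n$, the cycle $0, 1, 2, \ldots, n-1$ in $\zet_n$ has consecutive sums $\{2i+1 \bmod n\}$, which is all of $\zet_n$ since $\gcd(2,n)=1$; I would extend this to arbitrary odd-order $A$ via a product construction, running a rainbow cycle on the inner factor inside each vertex of an outer rainbow cycle and choosing bridging edges to realize the missing sums. For groups with $|I(A)| \geq 2$ and $A \not\cong (\zet_2)^m$, write $A = L \oplus H$ with $L$ the Sylow $2$-subgroup and $|H|$ odd, traverse $H$ rainbow-wise on each $L$-layer, and pair non-involutions of $L$ with their inverses so that every element of $A$ occurs exactly once as a consecutive sum. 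In the $|I(A)| = 1$ case ($A \cong \zet_{2^{\alpha}} \oplus H$), the same scheme essentially works but one sum must be sacrificed; a convenient choice is to force the unique involution to be the duplicated value. For $A \cong (\zet_2)^m$ with $m>1$, I would build the cycle inductively, taking $m=2$ as an explicit base case and glueing two rainbow cycles on $(\zet_2)^{m-1}$ along a carefully chosen transition.

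The main obstacle is the construction in the mixed-even case and in $(\zet_2)^m$: one has to exhibit a concrete Hamiltonian cycle and verify by direct calculation that the multiset of consecutive sums hits the claimed number of distinct elements of $A$. The upper bound is formal once $\sigma_A$ is pinned down, but the lower bound is a group sequencing/terrace-type problem, and I expect the detailed case analysis — especially when the $2$-part is neither cyclic nor elementary abelian — to be where most of the work sits. Verifying the pattern at small orders such as $\zet_4$, $(\zet_2)^2$, $\zet_6$, and $\zet_2 \oplus \zet_4$ should already reveal the shape of the general construction.
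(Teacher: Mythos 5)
This statement is quoted from Lev's paper (reference \cite{Lev}); the present paper gives no proof of it, so there is nothing internal to compare against and your attempt has to stand on its own.

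Your upper-bound half is correct and complete: the identity $\sum_i s_i = 2\sigma_A$, the fact that $\sigma_A$ is the unique involution when $|I(A)|=1$ and $0$ otherwise, the resulting contradiction ruling out $|A|$ distinct sums when $|I(A)|=1$, and the refined argument for $A\cong(\zet_2)^m$, $m>1$ (where $0$ can never be a consecutive sum and the ``all of $A^*$ plus one repeat $r$'' count forces $r=0$) are all sound. The genuine gap is the other half: the theorem asserts equalities, and everything beyond the cyclic odd case is left as a plan (``I would extend this via a product construction'', ``the same scheme essentially works'', ``I would build the cycle inductively''). Exhibiting a Hamiltonian cycle of $A$ whose consecutive sums are all distinct is exactly the statement that $A$ is a harmonious group, and the cases $|I(A)|=1$ and $A\cong(\zet_2)^m$ require explicit sequencings achieving $|A|-1$ and $|A|-2$ distinct sums; this sequencing/terrace-type construction, with its case analysis over the structure of the Sylow $2$-subgroup, is the real content of Lev's theorem and is precisely what is missing. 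As written, your argument establishes only the upper bounds $\sigma_{\max}(A)\le |A|-1$ and $\sigma_{\max}(A)\le |A|-2$ in the respective cases, not the stated values, so the proof is incomplete without carrying out (or citing) the constructions.
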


This problem is also connected with the concept of \textit{$R$-sequenceability} of groups. A group $A$ of order $n$ is said to be $R$-\textit{sequenceable} if the nonidentity elements of the group can be listed in a sequence $g_1, g_2,\ldots, g_{n-1}$ such that  $g_1^{-1}g_2, g_2^{-1}g_3,\ldots, g_{n-1}^{-1}g_1$ are all distinct. This concept was introduced in  1974 by Ringel \cite{Ringel}, who used this concept in his solution of Heawood map coloring problem.
 An Abelian group is $R^*$-sequenceable if it has an $R$-sequencing $g_1, g_2,\ldots, g_{n-1}$ such that $g_{i-1}g_{i+1}=g_i$ for some $i$ (subscripts are read modulo $n - 1$). The term was introduced by Friedlander et al. \cite{Friedlander}, who showed that the existence of an $R^*$-sequenceable Sylow 2-subgroup is a sufficient condition for a group to be $R$-sequenceable. The following theorem has been proved:
\begin{theorem}[\cite{Headley}]\label{Headley} An Abelian group whose Sylow 2-subgroup is noncyclic and not of order 8 is $R^*$-sequenceable.
\end{theorem}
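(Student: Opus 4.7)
The plan is to prove this constructively by producing an explicit $R^*$-sequencing for each Abelian group $A$ satisfying the hypotheses. The first step is to invoke the fundamental theorem and write $A\cong L\oplus H$, where $L$ is the Sylow $2$-subgroup of $A$ (noncyclic and not of order $8$ by assumption) and $|H|$ is odd. The strategy is to handle $L$ alone first, and then lift the sequencing to all of $A$ using the odd-order part.

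For the Sylow $2$-subgroup $L$, I would proceed by case analysis and induction on $|L|$. The smallest admissible case, $L\cong\zet_2\oplus\zet_2$, admits the cyclic listing $(1,0),(0,1),(1,1)$, whose consecutive differences $(1,1),(1,0),(0,1)$ exhaust the nonidentity elements, and the identity $(1,0)+(1,1)=(0,1)$ witnesses the $R^*$ condition at the middle index. For larger admissible $L$ (necessarily with $|L|\geq 16$), the inductive step expresses $L$ as $L_0\oplus K$ for a strictly smaller admissible factor $L_0$ and a cyclic $2$-group $K$, and constructs an $R^*$-sequencing of $L$ by interleaving an $R^*$-sequencing of $L_0$ with a sweep through the $K$-coordinates, taking care that the resulting consecutive differences remain distinct on $L\setminus\{0\}$ and that the relation $g_{i-1}+g_{i+1}=g_i$ is preserved at one position (inherited from the $L_0$-coordinate).

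The final step is the product construction from $L$ to $A=L\oplus H$. Since $|H|$ is odd, $H$ is $R$-sequenceable by classical results (consistent with Theorem~\ref{Lev}, which gives $\sigma_{\max}(H)=|H|$), so one may list $H$ as $h_0=0,h_1,\ldots,h_{|H|-1}$ in a useful order. Interleaving the $R^*$-sequencing $l_1,\ldots,l_{|L|-1}$ of $L$ with this listing of $H$ block by block, with each block sweeping through one coset of $L$ inside $A$, yields a cyclic arrangement of $A\setminus\{0\}$ in which consecutive differences are pairwise distinct (as distinctness holds coordinate-wise) and one block retains the $R^*$ witness from $L$.

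The main obstacle is the base-case analysis for the Sylow $2$-subgroup, which is exactly where the hypothesis ``not of order $8$'' becomes essential. One must simultaneously establish existence of an $R^*$-sequencing for every admissible noncyclic $2$-group and verify that the two noncyclic groups of order $8$, namely $\zet_2\oplus\zet_4$ and $(\zet_2)^3$, admit no $R^*$-sequencing at all. The latter is a finite but delicate enumeration: one lists the cyclic orderings of the seven nonidentity elements whose consecutive differences exhaust the group, and checks that $g_{i-1}+g_{i+1}=g_i$ fails at every index. This exceptional non-existence both forces the stated exclusion in the hypothesis and prevents any induction for $|L|\geq 16$ from bootstrapping off an order-$8$ base.
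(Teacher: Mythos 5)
This statement is quoted from Headley's paper \cite{Headley}; the paper you were given offers no proof of it, so your attempt has to be judged as a reconstruction of Headley's result, and as it stands it has genuine gaps. The most serious one is structural: your induction on the Sylow $2$-subgroup $L$ requires writing $L\cong L_0\oplus K$ with $L_0$ a \emph{strictly smaller admissible} (noncyclic, not of order $8$) factor and $K$ cyclic, but by the uniqueness of the invariant-factor decomposition this is impossible for every rank-two $2$-group $\zet_{2^a}\oplus\zet_{2^b}$ of order at least $16$ (e.g.\ $\zet_2\oplus\zet_8$, $\zet_4\oplus\zet_4$, $\zet_4\oplus\zet_8$): their only proper direct factors are cyclic. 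So an infinite family of the groups the theorem covers is unreachable by your inductive scheme, and these are precisely among the cases Headley must treat by explicit constructions. In addition, both ``interleaving'' steps are asserted rather than constructed. In the lift from $L$ to $A=L\oplus H$ the claim that distinctness of consecutive differences ``holds coordinate-wise'' does not work as stated: the block corresponding to the coset of $L$ containing $0$ omits one element, so the blocks have unequal lengths, the differences at block boundaries are not controlled by either factor alone, and one must separately argue that all $|A|-1$ differences are distinct \emph{and} that an $R^*$ witness survives. The classical product lemma of Friedlander, Gordon and Miller \cite{Friedlander} (an $R^*$-sequenceable Sylow $2$-subgroup forces $R$-sequenceability of the whole group) shows this step is nontrivial, and it yields $R$-, not automatically $R^*$-, sequenceability, whereas the statement you are proving asserts $R^*$-sequenceability of all of $A$.

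Two smaller points. Your base case $\zet_2\oplus\zet_2$ is verified correctly, but the theorem does not require you to show that the two noncyclic groups of order $8$ admit no $R^*$-sequencing: ``not of order $8$'' is a hypothesis, not part of a biconditional, so that enumeration (which you also do not carry out) is dispensable; it only explains why the exclusion is there and why the paper handles $(\zet_2)^3$ by the separate explicit labeling of Figure~\ref{p8}. As written, then, the proposal is a plausible high-level outline in the spirit of the literature, but the constructive core — explicit $R^*$-sequencings for the rank-two $2$-groups and a correct product construction preserving the $R^*$ condition — is exactly what is missing, and that core is the content of Headley's theorem.
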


\begin{theorem}
    A path $P_n$ is $A$-antimagic if and only if $n\not \equiv 2 \pmod 4$.
\end{theorem}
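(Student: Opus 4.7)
I read the statement as quantified over all finite abelian groups $A$ of order $n$, so the plan is to prove both directions separately and to split the sufficiency by the structure of $A$. The necessity is immediate from Observation~\ref{2mod4}: if $n\equiv 2\pmod 4$, then every abelian group $A$ of order $n$ has $|A|\equiv 2\pmod 4$, so no tree on $n$ vertices is $E_A$-cordial, and for a tree of order $|A|$ this coincides with being $A$-antimagic.

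For sufficiency fix $A$ with $|A|=n\not\equiv 2\pmod 4$. When $n$ is odd, Corollary~\ref{oddpaths} already suffices. Otherwise $n\equiv 0\pmod 4$, and I split into three exhaustive subcases. First, if $A\not\cong(\zet_2)^m$ and $|I(A)|\neq 1$, Theorem~\ref{Lev} provides a sum-rainbow Hamiltonian cycle of $K_A$, which after rotation reads $0=a_0,a_1,\dots,a_{n-1},a_0$ with cycle-sums $a_i+a_{i+1\,(\bmod\,n)}$ enumerating $A$. Setting $f(e_i)=a_i$ on $P_n=v_1\cdots v_n$ uses each element of $A^*$ exactly once as an edge label, and because $a_0=0$ the vertex sums match the cycle-sums ($v_1=a_1=a_0+a_1$, $v_n=a_{n-1}=a_{n-1}+a_0$, and $v_i=a_{i-1}+a_i$ inside), so they also enumerate $A$. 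Second, if $|I(A)|=1$ then $A$ is cyclic of order $n$: for $n=4$ I would write the labeling $f(e_1)=0$, $f(e_2)=1$, $f(e_3)=2$ by hand (vertex sums $0,1,3,2$), and for $n\geq 8$ apply Theorem~\ref{ant} with $m=n/4\geq 2$ and trivial odd part.

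The remaining subcase $A\cong(\zet_2)^m$, $m\geq 2$, is where I expect the main difficulty to lie. For $m\neq 3$ Theorem~\ref{Headley} supplies an $R^*$-sequencing $g_1,\dots,g_{n-1}$ of $A^*$; cyclically rotating, I may assume $g_{n-2}+g_1=g_{n-1}$. Setting $f(e_1)=0$ and $f(e_i)=g_{i-1}$ for $i\geq 2$ uses every element of $A$ except $g_{n-1}$ exactly once as an edge label. Since $-x=x$ throughout $A$, the $n-1$ $R$-differences of the sequence become $g_j+g_{j+1}$ for $j=1,\dots,n-2$ together with $g_1+g_{n-1}$, and these are a permutation of $A^*$; the interior vertex sums $g_{j}+g_{j+1}$ for $j=1,\dots,n-3$ realize the first $n-3$ of them, while the two omitted $R$-differences $g_{n-2}+g_{n-1}$ and $g_1+g_{n-1}$ collapse, by the $R^*$-condition, to $g_1$ and $g_{n-2}$---precisely the values $v_2$ and $v_n$. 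Together with $v_1=0$ the vertex sums then cover $A$. The isolated case $m=3$, which Theorem~\ref{Headley} does not treat, I would dispose of by exhibiting one explicit $(\zet_2)^3$-antimagic labeling of $P_8$. Aligning the $R^*$-condition with the path boundary and dealing with this exceptional group are, I expect, the principal technical points of the proof.
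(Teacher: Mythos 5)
Your overall route is the same as the paper's: necessity via Observation~\ref{2mod4}, and sufficiency split into the sum-rainbow Hamiltonian cycle of Theorem~\ref{Lev} (cut at $0$) when $|I(A)|\neq 1$ and $A\not\cong(\zet_2)^m$, Theorem~\ref{ant} plus a hand labeling of $P_4$ when $|I(A)|=1$, and Headley's $R^*$-sequencing with the rotation trick (the two missing cyclic differences collapsing to $v_2$ and $v_n$) for $A\cong(\zet_2)^m$, $m\neq 3$, with $P_8$ done by an explicit example. Your $R^*$-argument and your direct construction from the rainbow Hamiltonian cycle are correct and essentially reproduce the paper's Observations~\ref{cc} and~\ref{cp} in one step; handling odd $n$ by Corollary~\ref{oddpaths} rather than through Theorem~\ref{Lev} is an immaterial variation.

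There is, however, one genuine error: in the case $|I(A)|=1$ you assert that $A$ is cyclic of order $n$. That is false; $|I(A)|=1$ only says the Sylow $2$-subgroup of $A$ is cyclic, e.g.\ $A\cong\zet_4\oplus\zet_3\oplus\zet_3$ (order $36\equiv 0\pmod 4$) has a unique involution but is not cyclic, and your prescription ``apply Theorem~\ref{ant} with trivial odd part'' does not apply to it, while such groups also fall outside your other two subcases. The repair is the one the paper uses: write $A\cong\zet_{2^a}\oplus H$ with $a\geq 2$ and $H$ odd; if $a\geq 3$ this is already $\zet_{4m}\oplus H$ with $m=2^{a-2}>1$, and if $a=2$ and $H$ is nontrivial, absorb an odd cyclic direct factor $\zet_d$ ($d>1$) of $H$ into the $2$-part to get $A\cong\zet_{4d}\oplus H'$ with $d>1$ odd, so Theorem~\ref{ant} applies; only $A\cong\zet_4$ itself remains, which is your explicit $P_4$ labeling. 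With that substitution (and actually writing down the $(\zet_2)^3$ labeling of $P_8$, as the paper does in Figure~\ref{p8}), your proof is complete and coincides with the paper's.
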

\begin{proof}
  Assume first that $|I(A)|=1$, if now $n \equiv 2 \pmod 4$, then we are done by Theorem~\ref{2mod4}. For $n \equiv 0 \pmod 4$ there is $A\cong \zet_4$ or by fundamental theorem on abstract algebra $A\cong\zet_{4m}\oplus H$ for $|H|$ odd and $m\geq 1$.
  If $A\cong \zet_4$ then we apply Theorem~\ref{cyclic}, otherwise Theorem~\ref{ant}.
Suppose now that $|I(A)|\neq 1$, if now $A\not\cong(\zet_2)^m$, $m>1$ (i.e $|I(A)|\neq1$, then by Observation~\ref{cc} and Theorem~\ref{Lev} there exists an $E_A$-cordial labeling  of $C_n$ and hence there exists an $E_A$-cordial labeling of $P_n$ by Observation~\ref{cp}. Recall that this means that $P_n$ is $A$-antimagic.
From now $A\cong(\zet_2)^m$ for some $m>1$.
If $m=3$, then the desired labeling is given in Figure~\ref{p8}.

 \begin{figure}[ht!]
\begin{center}
\begin{tikzpicture}[scale=0.7,style=thick,x=1cm,y=1cm]
\def\vr{3pt} % \vr = vertex radius;

% define vertices
%%%%%
%%%%%
\path (0,0) coordinate (v16);
\path (2.5,0) coordinate (v17);
\path (5,0) coordinate (v18);
\path (7.5,0) coordinate (v19);
\path (10,0) coordinate (v20);
\path (12.5,0) coordinate (v21);
\path (15,0) coordinate (v22);
\path (17.5,0) coordinate (v23);

%  edges
\draw  (v16) -- (v17) -- (v18) -- (v19) -- (v20) -- (v21) -- (v22) -- (v23);

\draw (1.25,0.5) node {$(0,0,0)$};
\draw (3.75,0.5) node {$(1,0,0)$};
\draw (6.25,0.5) node {$(0,1,0)$};
\draw (8.75,0.5) node {$(0,0,1)$};
\draw (11.25,0.5) node {$(1,1,0)$};
\draw (13.75,0.5) node {$(1,1,1)$};
\draw (16.25,0.5) node {$(1,0,1)$};

\draw (v16) [fill=black] circle (\vr);
\draw (v17) [fill=black] circle (\vr);
\draw (v18) [fill=black] circle (\vr);
\draw (v19) [fill=black] circle (\vr);
\draw (v20) [fill=black] circle (\vr);
\draw (v21) [fill=black] circle (\vr);
\draw (v22) [fill=black] circle (\vr);
\draw (v23) [fill=black] circle (\vr);

\draw[anchor = north] (v16) node {$(0,0,0)$};
\draw[anchor = north] (v17) node {$(1,0,0)$};
\draw[anchor = north] (v18) node {$(1,1,0)$};
\draw[anchor = north] (v19) node {$(0,1,1)$};
\draw[anchor = north] (v20) node {$(1,1,1)$};
\draw[anchor = north] (v21) node {$(0,0,1)$};
\draw[anchor = north] (v22) node {$(0,1,0)$};
\draw[anchor = north] (v23) node {$(1,0,1)$};
\end{tikzpicture}
\end{center}
\caption{A ${(\zet_{2})^3}$-antimagic labeling for $P_{8}$.}\label{p8}
\end{figure}

  Let $n=2^m$ for $m\neq 3$ and $a_1,a_2,\ldots,a_{n-1}$ be an $R^*$-sequence of $A$ which exists by Theorem~\ref{Headley}. Note that since $a_i=-a_i$ we obtain that
 the sequence $a_1+a_2,a_2+a_3,\ldots,a_{n-1}+a_1$ is injective. Without loss of generality, we can assume that $a_2=a_1+a_{n-1}$. Note that this implies $a_{n-1}=a_1+a_2$. Let $P_n=v_1v_2\ldots v_n$. Define an $A$-antimagic  labeling as $f(v_1v_2)=0$ and $f(v_iv_{i+1})=a_i$ for $i=2,3,\ldots,n-1$.
\end{proof}
\section{Conclusions}
In this paper, we gave sufficient and necessary conditions for paths $P_n$ possessing an $A$-antimagic labeling. By Observation~\ref{2mod4} no tree of order $n\equiv2\pmod 4$ admits an $A$-cordial labeling for any $A$. Therefore, we state the following conjecture:
\begin{conjecture}
    A tree $T$ is $A$-antimagic if and only if $|T|\not\equiv2\pmod 4$.
\end{conjecture}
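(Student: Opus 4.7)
The plan is to prove both directions in turn, using the apparatus already set up in the paper. For the \emph{only if} direction, I would simply cite Observation~\ref{2mod4}: if $n\equiv 2\pmod 4$ then every Abelian group $A$ of order $n$ satisfies $|A|\equiv 2\pmod 4$, so $P_n$ admits no $E_A$-cordial labeling and hence no $A$-antimagic labeling. Everything that follows handles the \emph{if} direction: fix $n\not\equiv 2\pmod 4$ and an arbitrary Abelian group $A$ of order $n$, and exhibit an $E_A$-cordial (equivalently, $A$-antimagic) labeling of $P_n$. I would split on whether $|I(A)|=1$ or $|I(A)|\neq 1$, since the available tools are of different natures in the two cases.

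In the case $|I(A)|=1$, the Sylow $2$-subgroup of $A$ is cyclic, so $A\cong\zet_{2^k}\oplus H$ with $|H|$ odd; the hypothesis $n\not\equiv 2\pmod 4$ rules out $k=1$. If $k=0$, then $n$ is odd and Corollary~\ref{oddpaths} applies directly. Otherwise $k\geq 2$ and $A\cong\zet_{4m}\oplus H$ for some $m\geq 1$: if $A\cong\zet_4$ I would invoke Theorem~\ref{cyclic} on $P_4$, and in all remaining situations I would invoke Theorem~\ref{ant}, which constructs the required labeling explicitly.

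In the case $|I(A)|\neq 1$ with $A\not\cong(\zet_2)^m$, Theorem~\ref{Lev} gives $\sigma_{\max}(A)=|A|$, so the complete graph on the vertex set $A$ contains a Hamiltonian cycle whose consecutive sums are pairwise distinct. Using this cycle as an $A$-labeling of $V(C_n)$ gives an $A$-cordial labeling of $C_n$; Observation~\ref{cc} converts it to an $E_A$-cordial labeling of $C_n$, and Observation~\ref{cp} then passes to $P_n$, which is what we want.

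The only remaining, and principal, obstacle is $A\cong(\zet_2)^m$ with $m\geq 2$, because Theorem~\ref{Lev} drops to $|A|-2$ and the cycle-transfer argument no longer produces an all-distinct list of vertex sums. I would treat $m=3$ by exhibiting the labeling in Figure~\ref{p8}. For $m\neq 3$, I would apply Theorem~\ref{Headley} to obtain an $R^*$-sequence $a_1,a_2,\ldots,a_{n-1}$ of the nonidentity elements; since $a_i=-a_i$ in $(\zet_2)^m$, the cyclic list of consecutive sums $a_i+a_{i+1}$ is automatically injective. Rotating so that the $R^*$ witness sits at index $1$, i.e.\ $a_2=a_1+a_{n-1}$ (equivalently $a_{n-1}=a_1+a_2$), I would label $P_n=v_1v_2\ldots v_n$ by $f(v_1v_2)=0$ and $f(v_iv_{i+1})=a_i$ for $2\leq i\leq n-1$. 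The verification then reduces to checking that the vertex sums $0$, $a_2=a_{n-1}+a_1$, $a_2+a_3,\ldots,a_{n-2}+a_{n-1}$, $a_{n-1}=a_1+a_2$ are pairwise distinct: the nonzero sums are precisely the full cyclic list of consecutive $R$-sums and hence distinct by $R$-sequenceability, and $0$ cannot appear among them because in $(\zet_2)^m$ no two distinct elements sum to $0$. The delicate point here is justifying that this rotation of the $R^*$-sequence really makes both endpoint sums $a_2$ and $a_{n-1}$ slot into the cyclic sum-list, which is exactly the role of the $R^*$ condition.
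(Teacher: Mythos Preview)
The statement you are attempting to prove is a \emph{conjecture} about arbitrary trees $T$, not a theorem; the paper does not prove it and explicitly leaves it open. Your entire ``if'' direction is carried out only for the path $P_n$: every tool you invoke (Corollary~\ref{oddpaths}, Theorem~\ref{ant}, Theorem~\ref{cyclic}, the Hamiltonian-cycle argument via Theorem~\ref{Lev} and Observations~\ref{cc}--\ref{cp}, the $R^*$-sequence construction) produces a labeling of $P_n$, and nowhere do you address a general tree. What you have written is, in essence, a faithful reconstruction of the paper's proof of the \emph{path} case (Theorem~4.1), not a proof of the conjecture.

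The genuine gap is therefore structural rather than technical: the ``if'' direction for arbitrary trees is precisely the open content of the conjecture, and none of the mechanisms you list transfers from paths to trees. Theorem~\ref{ant} is a path-specific block construction; the $\sigma_{\max}$ argument passes through a Hamiltonian cycle on $A$ and hence through $C_n$ and $P_n$; the $R^*$-sequence trick labels consecutive edges of a path. For a tree with branching, one would need an entirely new idea to distribute edge labels so that all vertex sums are distinct, and the paper offers none. Your ``only if'' direction via Observation~\ref{2mod4} is fine for general trees, but that is the easy half.
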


Taking into account Conjecture~\ref{false}, we believe it is worth posing the following open problem.
\begin{problem}
    Characterize trees such that they have $A^*$-antimagic labeling for any $A$ with $|I(A)|\neq1$.
\end{problem}

\section{Statements and Declarations}
This work was  supported by program ''Excellence initiative – research university'' for the AGH University.

%%%%%%%%%%%%%%%%%%%%%%%%%%%%%%%%%%%%%%%%%%%%%%%%%%%%%%%%%%%%%%%%%%%%%%%%%%%%%%%%%%%%%%%%%%%%%%%%%%

\end{document}